\DeclareMathOperator{\E}{{\mathbb E}}
\DeclareMathOperator{\Z}{{\mathbb Z}}
\DeclareMathOperator{\N}{{\mathbb N}}
\DeclareMathOperator{\R}{{\mathbb R}}
\DeclareMathOperator{\dom}{dom}
\providecommand{\epsilon}{\varepsilon}
\renewcommand{\phi}{\varphi}
\renewcommand{\theta}{\vartheta}
\providecommand{\abs}[1]{\lvert #1 \rvert}
\providecommand{\norm}[1]{\lVert #1 \rVert}
\providecommand{\bnorm}[1]{{\Bigl\lVert #1 \Bigr\rVert}}
\renewcommand{\le}{\leqslant}
\renewcommand{\leq}{\le}
\renewcommand{\ge}{\geqslant}
\renewcommand{\geq}{\ge}
\newtheorem{theorem}{Theorem}
\newtheorem{proposition}[theorem]{Proposition}
\newtheorem{corollary}[theorem]{Corollary}
\theoremstyle{definition}
\newtheorem{remark}[theorem]{Remark}
\setlist[enumerate]{leftmargin=.5in}
\setlist[itemize]{leftmargin=.5in}
\title{Parameter Estimation in an SPDE Model for Cell Repolarisation\thanks{We are grateful to two anonymous referees for very helpful comments and questions.  This research has been partially funded by Deutsche Forschungsgemeinschaft (DFG) - SFB1294/1 - 318763901. RA gratefully acknowledges support by the European Research Council, ERC grant agreement 647812 (UQMSI).}}
\author{{Randolf Altmeyer}\thanks{University of Cambridge, Department of Pure Mathematics \& Mathematical Statistics, Wilberforce Road, CB3 0WB Cambridge, United Kingdom. Email: ra591@cam.ac.uk.} 
\hspace{0.5cm} {Till Bretschneider}\thanks{Department of Computer Science, University of Warwick, Academic Loop Road, CV4 7AL Coventry, United Kingdom. Email: T.Bretschneider@warwick.ac.uk.}
\hspace{0.5cm} {Josef Janák}\thanks{Universität Potsdam, Institut für Mathematik, Karl-Liebknecht-Str. 24/25, 14476 Potsdam, Germany. Email: josefjanak@seznam.cz.}
\hspace{0.5cm} {Markus Reiß}\thanks{Humboldt-Universität zu Berlin, Institut für Mathematik, Unter den Linden 6, 10099 Berlin, Germany. Email: mreiss@math.hu-berlin.de.}
}
\begin{document}

\global\long\def\P{\mathbb{P}}
\global\long\def\dom{\operatorname{dom}}%
\global\long\def\b#1{\mathbb{#1}}%
\global\long\def\c#1{\mathcal{#1}}%
\global\long\def\s#1{{\scriptstyle #1}}%
\global\long\def\u#1#2{\underset{#2}{\underbrace{#1}}}%
\global\long\def\r#1{\xrightarrow{#1}}%
\global\long\def\mr#1{\mathrel{\raisebox{-2pt}{\ensuremath{\xrightarrow{#1}}}}}
\global\long\def\t#1{\left.#1\right|}%
\global\long\def\l#1{\left.#1\right|}%
\global\long\def\f#1{\lfloor#1\rfloor}%
\global\long\def\sc#1#2{\langle#1,#2\rangle}%
\global\long\def\abs#1{\lvert#1\rvert}%
\global\long\def\bnorm#1{\Bigl\lVert#1\Bigr\rVert}%
\global\long\def\wraum{(\Omega,\c F,\P)}%
\global\long\def\fwraum{(\Omega,\c F,\P,(\c F_{t}))}%
\global\long\def\norm#1{\lVert#1\rVert}%

\maketitle

\begin{abstract}
As a concrete setting where stochastic partial differential equations (SPDEs) are able to model real phenomena, we propose a stochastic Meinhardt model for cell repolarisation and study how parameter estimation techniques developed for simple linear SPDE models apply in this situation.
We establish the existence of mild SPDE solutions and we investigate the  impact of the driving noise process on pattern formation in the solution. We then pursue estimation of the diffusion term and show asymptotic normality for our estimator as the space resolution becomes finer.  The finite sample performance is investigated for synthetic and real data.
\end{abstract}

\section{Introduction}

Stochastic partial differential equations (SPDEs) generalize deterministic partial differential equations (PDEs) by introducing driving noise processes into the dynamics. These noise processes encapsulate unresolved and often unknown processes happening at faster scales and random external forces acting on the system. Not only the theory of SPDEs, but also the statistics for SPDEs have recently seen a significant development, paving the way for a realistic modeling of complex phenomena. We demonstrate the ability of SPDE models to describe cell repolarisation patterns and we show how parameter estimation techniques, developed for simplified linear models, apply in more complex and physically relevant situations. We see this as an important step to make theoretical tools also available for concrete experimental setups. For the sake of clarity we focus on a specific stochastic cell polarisation problem, but the methodology has a much broader scope.

The SPDE we are interested in belongs to a general class of activator-inhibitor models, which can be described by two coupled stochastic reaction-diffusion equations of the form
\begin{align}
\begin{cases}
\frac{\partial}{\partial t}A(t,x)=D_{A}\frac{\partial^{2}}{\partial x^{2}}A(t,x)+f_A(X(t,x),x)+\sigma_{A}\xi_{A}(t,x),\\
\frac{\partial}{\partial t}I(t,x)=D_{I}\frac{\partial^{2}}{\partial x^{2}}I(t,x)+f_I(X(t,x),x)+\sigma_{I}\xi_{I}(t,x),
\end{cases}\label{eq:stochasticMeinhardt}
\end{align}
with $X=(A,I)$,  nonlinear functions $f_A$, $f_I$ and with space-time white noise processes $\xi_{A}$, $\xi_{I}$. In cell dynamics, we think of $A$ as a hypothetical signalling molecule that in response to an external signal gradient becomes enriched on one side of the cell, yielding polarity. $I$ counter-acts $A$ so that removal of the signal results in loss of polarity. We here specifically consider repolarisation, where the extracellular signal gradient is inverted so that $A$ is removed from one side of the cell and reappears on the opposite side, cf. Figure \ref{fig:1} below.

In directed animal cell motion cells respond to for example chemical or mechanical extracellular signal gradients by adopting a functional asymmetry in form of a front-rear pattern. Protrusion of the cell front is driven by local oriented growth of a dense network of cytoskeletal actin filaments pushing the cellular envelope \citep{mullins1998interaction}. Myosin-II motor molecules contracting the looser, ubiquitous cortical actin network lining the cell membrane result in retraction of the cell rear in stringent environments \citep{cramer2013mechanism}.

Models for spontaneous symmetry breaking in non-linear reaction diffusion systems by Turing \citep{Turing1952} have been instrumental in understanding biological pattern formation, often paraphrased in form of simple deterministic two-variable activator-inhibitor models such as \eqref{eq:stochasticMeinhardt} without noise terms. Suprathreshold random perturbations can result in fast autocatalytic local growth of the activator variable $A$, which eventually is kept in check by the slower inhibitor $I$. Faster diffusion of the inhibitor compared to the activator prevents formation of nearby activator peaks.

Meinhardt \citep{Meinhardt1999} has been the first to apply such models to cell polarisation in the context of cell migration, where the ratio of activator-inhibitor diffusion can be tuned to either obtain a single stable cell front (Figure \ref{fig:1}(center)), or multiple independent fronts associated with non-directed random cell motility. Various mathematical models for cell polarisation and gradient sensing have been postulated (\citep{Levchenko2002}, \citep{Otsuji2007}, \citep{Jilkine2011}) aiming to capture different aspects of cellular physiology for example with regards to adaptation to extracellular signals, reviewed in \citep{cheng2020roles}. \citep{Lockley_2015} fitted deterministic versions of three different models for cell polarisation to experimental data of cells in a microfluidic chamber responding to inversion of gradients of hydrodynamic shear flow of different strengths \citep{dalous2008reversal}.  The parameter calibration was based on a least-squares approach, implicitly assuming that the deterministic dynamics are corrupted by Gaussian measurement  noise.

Recognising that in confined spaces and with limited number of molecules noise becomes increasingly important, more recently stochastic reaction-diffusion models for different biological problems have been employed, e.g. in \citep{Alonso_2018}, \citep{spill2015hybrid}. Spontaneous symmetry breaking in Turing-type models requires initial random perturbations, but we expect dynamic noise to destabilise patterns if the power of the noise is too large.

\begin{figure}
	\includegraphics[width=\textwidth]{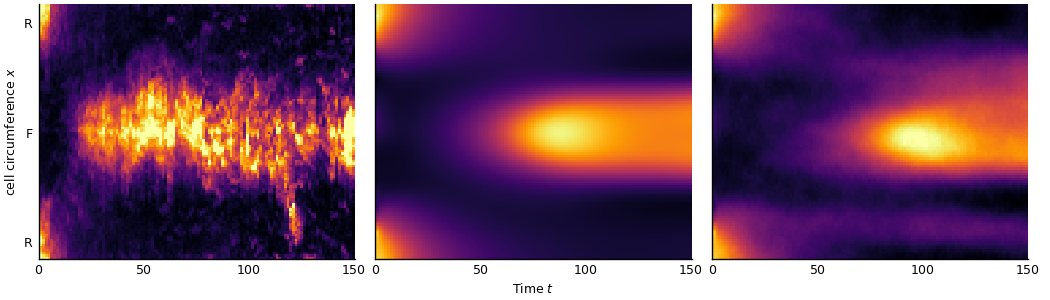}
	\caption{Heat maps for the space-time evolution of the activator $A$, brighter colors mean higher values, space region R denotes the old front/new rear, region F is the new front/old rear; (left) experimental data for measured fluorescence values averaged over several \emph{Dictyostelium} cells reacting to a gradient of shear flow; (center) solution to the deterministic Meinhardt model with experimentally fitted parameters; (right) a typical realisation of the stochastic Meinhardt model with noise level $0.02$.}
\label{fig:1}
\end{figure}

In the current paper we present a stochastic version of the modified Meinhardt two-variable model \citep{Lockley_2015}. We believe that the stochasticity in the data is to some considerable extent due to dynamic noise entering the dynamics as in \eqref{eq:stochasticMeinhardt}. The data generated from such a stochastic Meinhardt model is qualitatively of a different nature compared to a deterministic PDE model corrupted by measurement errors.
We perform a systematic study of the effect of different noise levels on the repolarisation of cells.  One result is that inclusion of moderate levels of noise in the model speeds up the repolarisation of cells, which biologically is interesting because it might be against our intuition that noise would negatively interfere with the formation of a pattern.

Recently, new tools for parameter estimation of stochastic differential equations have been developed, see \citep{Cialenco2018} for an overview. Most approaches focus on estimating coefficients for the linear part of the equation, either from discrete \citep{Cialenco2019a}, \citep{Hildebrandt2019} or spectral \citep{Huebner1995}, \citep{Pasemann} observations, but also aspects of the driving noise have been analysed \citep{Chong}, \citep{Bibinger_2020}. Owing to the physical restriction of being able to measure only local averages, \citep{Altmeyer2019b} have introduced {local measurements} and constructed estimators in a linear SPDE for the diffusion term which are provably rate-optimal. Even more, the proposed estimators apply in a nonparametric setting of spatially varying diffusion and are robust to misspecification of the noise or when subject to certain nonlinearities \citep{Altmeyer2020a}.

We extend the estimation method in \citep{Altmeyer2019b} to cope also with multiple spatial measurements, systems of SPDEs as \eqref{eq:stochasticMeinhardt} and with more general boundary conditions (here periodic boundary conditions will apply), cf. Remark \ref{rem:generalProof}. We shall perform parameter estimation in the stochastic Meinhardt model for cell repolarisation and provide confidence intervals to quantify the uncertainty. In particular, we are interested in determining the diffusion constant for the activator in the Meinhardt model. Although the activator variable in the model cannot be directly related to a specific molecular component, putting limits on how fast the activator spreads can ideally help narrowing down possible mechanisms. For example, spreading of the activator could be down to lateral growth of the actin network (slow), diffusion of chemoattractant receptors within the cell membrane (medium) or diffusion of phospholipid signalling molecules (PIP3) within the cell membrane (fast).

Mathematically, we derive a central limit theorem for our estimator by using advanced tools from stochastic analysis and semigroup theory. We are aware of only one related work \citep{Pasemann2020}, which uses the spectral method to fit parameters of a 2D Fitz-Hugh-Nagumo model for travelling actin waves through cells. For the stochastic Meinhardt model we compare in Section \ref{sec:numerics} below our method with the spectral estimation method.

In the next Section \ref{sec:meinhardt} the stochastic Meinhardt model is introduced, along with a rigorous result on the existence of a solution. Section \ref{sec:effectofnoise} presents main insights how adding noise to the Meinhardt model affects the dynamics and repolarisation. In Section \ref{sec:estimation} estimators for local measurements of the activator are analysed mathematically and applied in Section \ref{sec:numerics} to synthetic data, and in Section \ref{sec:realData} to experimental data. Section \ref{sec:discussion} discusses the main results. All technical details and proofs are deferred to Appendix \ref{subsec:Existence-of-a}. A description of the setup for numerical experiments and real data can be found in Appendix \ref{sec:setup}.

\section{The stochastic Meinhardt model}\label{sec:meinhardt}

Diffusion is considered to take place along the cell contour, and so we study the equation \eqref{eq:stochasticMeinhardt} with $0\leq t\leq  T$ for a fixed time horizon $T>0$ on a circle or 1D-torus $\Lambda=\R/(L\Z)$ of length $L>0$ or, equivalently, on $\Lambda=[0,L]$ with periodic boundary conditions. $A$ describes a membrane-bound autocatalytic activator requiring $f_A$ to be nonlinear, and with diffusion coefficient $D_A$. The production of $A$ is counteracted by a small cytosolic inhibitor $I$ with faster diffusion (that is, $D_I > D_A$), where $f_I$ is linear or nonlinear. In case of the two-variable Meinhardt model the functions $f_A$ and $f_I$ are given by
\begin{align}
f_A(y,x) & =r_{A}\frac{\zeta(x)\left(b_{A}+y_{1}^{2}\right)}{(\zeta_{I}+|y_2|) \left(1+\zeta_{A}y_{1}^{2}\right)}-r_{A}y_1,\quad f_I(y,x) = b_{I}y_1-r_{I}y_2,\label{eq:nonlinearities}
\end{align}
for $y\in\R^{2}$ and $x\in\Lambda$. The function
\begin{equation}
\zeta(x)=1- a \cdot \cos(2\pi x/L)\label{eq:signal}
\end{equation}
corresponds to an extracellular signal, for example a gradient of chemoattractant, which stimulates the production of $A$ with signal strength modulated by a constant $a$.  The extracellular signal is maximal at the center $L/2$ of the front.  The constants $r_{A}$, $r_{I}$ and $b_{A}$, $b_{I}$ are degradation and production rates, $\zeta_{A}$ controls the saturation and the Michaelis-Menten constant $\zeta_{I}$ prevents $f_A$ from exploding. While $b_I$ is fixed in our setup, it will generally depend on the pressure of the signal $\zeta$ \citep{Lockley_2015}. For a more detailed description of the nonlinearities $f_A$, $f_I$ and a stability analysis for varying parameters see \citep{lockley2017image}, \citep{meinhardt2009}.

Additional external forces acting on the cell membrane are modeled by two independent  space-time white noise processes $\xi_{A}$, $\xi_{I} $. By space-time white noise we mean a centered Gaussian process $\xi$ on $[0,T]\times\Lambda$ with covariance function
\[
	\text{Cov}\left(\xi(t,x),\xi(t',x')\right) = \delta(t-t')\delta(x-x').
\]
By integrating formally against test functions, $\xi$ induces an isonormal Gaussian process on $L^2([0,T]\times \Lambda)$. In this way, space-time white noise corresponds to a random Schwartz distribution on $L^2(\Lambda)$ with values in negative Sobolev spaces \cite{Hairer2009}. Since the nonlinearity $f_A(X(t,x),x)$ is not well-defined for a distribution valued process $X$, this means we cannot obtain classical solutions to the SPDE \eqref{eq:stochasticMeinhardt}. After formally integrating the noise, however, $W(t)=\int_0^t \xi(s,\cdot)ds$ is a (cylindrical) Wiener process with values in $L^2(\Lambda)$  \citep{DaPrato2014}, and we can use
the well-developed theory for SPDEs to show that \eqref{eq:stochasticMeinhardt} is well-posed in the mild sense.  The solution even has some minimal spatial regularity measured in the spaces $C^{k+s}(\Lambda;\R^2)$, $k\in\N_0$, $0\leq s<1$, equipped with the norm
\begin{align*}
	\norm{f}_{C^{k+s}(\Lambda;\R^2)} = \norm{f}_{C^k(\Lambda;\R^2)} + \sup_{x\neq y\in\Lambda} \frac{\norm{\frac{d^k}{dx^k}f(x)-\frac{d^k}{dx^k}f(y)}_{\R^2}}{|x-y|^s}.
\end{align*}

\begin{theorem}
\label{thm:existenceUniqueness}Consider the stochastic Meinhardt model corresponding to the SPDE in \eqref{eq:stochasticMeinhardt} with nonlinearities $f_A$, $f_I$ from \eqref{eq:nonlinearities} on $\Lambda=\R/(L\Z)$. Assume for the initial value $(A_{0},I_{0})\in C^{2+s}(\Lambda;\R^{2})$ for $0<s<1/2$. Then there exists a unique mild solution $X=(A,I)\in C([0,T];C^{s}(\Lambda;\R^{2}))$. The solution can be decomposed as $X=\bar{X}+\tilde{X}$, where $\bar{X}$ solves the linear equation \eqref{eq:stochasticMeinhardt} with $f_A=f_I=0$ and zero initial value, and with a perturbation process $\tilde{X}\in C([0,T];C^{2+s}(\Lambda;\R^2))$.
\end{theorem}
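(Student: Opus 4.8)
The plan is to recast \eqref{eq:stochasticMeinhardt} as an abstract semilinear stochastic evolution equation $dX=(\c{A}X+F(X))\,dt+\c{B}\,dW$ on the Hilbert space $H=L^{2}(\Lambda;\R^{2})$, where $\c{A}=\diag(D_{A}\partial_{x}^{2},D_{I}\partial_{x}^{2})$ with periodic boundary conditions generates an analytic $C_{0}$-semigroup $(S(t))_{t\ge0}$ acting componentwise through the two torus heat semigroups, $\c{B}=\diag(\sigma_{A},\sigma_{I})$, $W$ is a cylindrical Wiener process on $H$, and $F(u)(x)=(f_{A}(u(x),x),f_{I}(u(x),x))$. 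The crucial preliminary observation is that, although $f_{A}(\cdot,x)$ is not differentiable at $y=0$ because of the term $\abs{y}=\sqrt{y_{1}^{2}+y_{2}^{2}}$, it is nonetheless globally Lipschitz: in the rational part of $f_{A}$ the denominator is bounded below by $\zeta_{I}(1+\zeta_{A}y_{1}^{2})>0$, the map $y_{1}\mapsto(b_{A}+y_{1}^{2})/(1+\zeta_{A}y_{1}^{2})$ and its derivative are bounded, and $x\mapsto\zeta(x)$ is smooth, so this part is bounded with bounded (a.e.) gradient on $\R^{2}\times\Lambda$, whence $f_{A},f_{I}$ are globally Lipschitz in $(y,x)$. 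Consequently $F\colon C(\Lambda;\R^{2})\to C(\Lambda;\R^{2})$ is globally Lipschitz, and, since the composition of a $C^{s}$ function with a Lipschitz map is again $C^{s}$ with controlled seminorm, $F$ maps $C^{s}(\Lambda;\R^{2})$ boundedly into itself for every $s\in(0,1)$.

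The second step treats the stochastic convolution $\bar X(t)=\int_{0}^{t}S(t-r)\c{B}\,dW(r)$, which is by construction the mild solution of the linear equation in \eqref{eq:stochasticMeinhardt} with $f_{A}=f_{I}=0$ and zero initial value. It is classical that for the one-dimensional stochastic heat equation on a torus $\bar X$ has a modification with paths in $C([0,T];C^{s}(\Lambda;\R^{2}))$ for every $s<1/2$; one obtains this by the factorization method, using the second-moment bounds $\E\abs{\bar X(t,x)-\bar X(t,y)}^{2}\lesssim\abs{x-y}$ and $\E\abs{\bar X(t,x)-\bar X(t',x)}^{2}\lesssim\abs{t-t'}^{1/2}$, Gaussianity to promote these to $p$-th moments, the Sobolev embedding $W^{\gamma,p}(\Lambda)\hookrightarrow C^{\gamma-1/p}(\Lambda)$ with $\gamma<1/2$ and $p$ large, and Kolmogorov's continuity theorem for Banach-valued processes; see \citep{DaPrato2014}.

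Writing $\tilde X=X-\bar X$, a process $X$ is a mild solution of \eqref{eq:stochasticMeinhardt} if and only if $\tilde X(t)=S(t)X_{0}+\int_{0}^{t}S(t-r)F(\tilde X(r)+\bar X(r))\,dr$, and I would solve this in $C([0,\tau];C(\Lambda;\R^{2}))$ by Banach's fixed point theorem: the torus heat semigroup is a contraction on $C(\Lambda;\R^{2})$, $F$ is globally Lipschitz there and $\bar X$ has continuous paths, so the map $\tilde X\mapsto S(\cdot)X_{0}+\int_{0}^{\cdot}S(\cdot-r)F(\tilde X(r)+\bar X(r))\,dr$ is a contraction for $\tau$ small; since $F$ is globally---not merely locally---Lipschitz, the local solution extends to all of $[0,T]$, giving a unique mild solution $X=\bar X+\tilde X\in C([0,T];C(\Lambda;\R^{2}))$.

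Finally the regularity is bootstrapped. Since $F(X(\cdot))$ is bounded in $C(\Lambda;\R^{2})$ and $\norm{S(\tau)}_{C(\Lambda;\R^{2})\to C^{\theta}(\Lambda;\R^{2})}\lesssim\tau^{-\theta/2}$ for $\theta<2$, the deterministic convolution $\int_{0}^{\cdot}S(\cdot-r)F(X(r))\,dr$ lies in $C([0,T];C^{\theta}(\Lambda;\R^{2}))$ for every $\theta<2$, so with the second step and $S(\cdot)X_{0}\in C([0,T];C^{2+s})$ (analyticity on the little-Hölder scale, $X_{0}\in C^{2+s}$) we already get $X\in C([0,T];C^{s}(\Lambda;\R^{2}))$ for every $s<1/2$. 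Then $F(X(\cdot))$ is uniformly-in-time bounded in $C^{s}(\Lambda;\R^{2})$ and Hölder continuous in time with values in $C(\Lambda;\R^{2})$, hence $F(X(\cdot))\in C^{\beta}([0,T];C^{s'}(\Lambda;\R^{2}))$ for some $\beta>0$ and every $s'<s$ by the elementary interpolation inequality $\norm{g}_{C^{s'}}\lesssim\norm{g}_{C(\Lambda)}^{1-s'/s}\norm{g}_{C^{s}}^{s'/s}$; applying the generator through $\c{A}\int_{0}^{t}S(t-r)F(X(r))\,dr=\int_{0}^{t}\c{A}S(t-r)\bigl(F(X(r))-F(X(t))\bigr)\,dr+S(t)F(X(t))-F(X(t))$, where the integrand is $\lesssim(t-r)^{\beta-1}$ in $C^{s'}(\Lambda;\R^{2})$, hence integrable, and the remaining terms lie in $C^{s}\subset C^{s'}$, shows $\int_{0}^{\cdot}S(\cdot-r)F(X(r))\,dr\in C([0,T];C^{2+s'}(\Lambda;\R^{2}))$ and therefore $\tilde X=S(\cdot)X_{0}+\int_{0}^{\cdot}S(\cdot-r)F(X(r))\,dr\in C([0,T];C^{2+s'}(\Lambda;\R^{2}))$ for every $s'<1/2$; since the exponent in the statement ranges over the open interval $(0,1/2)$ this yields the asserted decomposition, uniqueness being inherited from the previous step. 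I expect the main obstacle to be the non-smoothness of $f_{A}$ at the origin, which precludes the usual contraction and $C^{1}$-Nemytskii arguments directly in $C([0,T];C^{s})$; the strategy circumvents it by running the fixed point in the sup-norm, where $f_{A}$ is genuinely globally Lipschitz, and recovering Hölder regularity afterwards from parabolic smoothing and boundedness of composition on $C^{s}$ alone, never differentiating $f_{A}$. The only further nontrivial analytic input is the Schauder-type gain of two spatial derivatives for the deterministic convolution used in the last step, supplied by the optimal regularity theory for analytic semigroups on little-Hölder spaces.
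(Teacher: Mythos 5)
Your argument is correct and follows the paper's skeleton: the same abstract reformulation on $L^2(\Lambda;\R^2)$, the same splitting $X=\bar X+\tilde X$ into stochastic convolution plus a random PDE, a fixed point for $\tilde X$ in $C([0,T];C(\Lambda;\R^2))$ resting on the global Lipschitz property of $F$ (which you verify explicitly, while the paper only asserts it), and a two-stage bootstrap for $\tilde X$. The technical machinery differs, though. For $\bar X$ the paper works in Bessel-potential spaces $W^{s',p}(\Lambda)$, citing \citep[Proposition 30]{Altmeyer2020a} and Sobolev embedding, whereas you use factorization/Kolmogorov directly on the H\"older scale; these are interchangeable. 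The more substantive divergence is the final gain of two derivatives: the paper never needs any time regularity of the nonlinearity, since it stays in $W^{\eta,p}$ and uses only the uniform-in-time spatial bound $\sup_{t}\norm{F(X(t))}_{s'',p}<\infty$ together with the smoothing estimate with integrable singularity $(t-r)^{-1+\varepsilon/2}$; your route instead requires $F(X(\cdot))$ to be H\"older in time with values in $C^{s'}$, obtained by interpolating between time-H\"older continuity in sup-norm and boundedness in $C^{s}$, which in turn needs the (true, standard, but additional) check that $\bar X$ and $\tilde X$ are time-H\"older in $C(\Lambda;\R^2)$ before the add-and-subtract/Schauder argument applies. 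What you gain is staying entirely on the H\"older scale and avoiding $L^p$-theory; what the paper gains is a leaner bootstrap with fewer intermediate verifications. One small caveat: your conclusion ``$\tilde X\in C([0,T];C^{2+s'})$ for every $s'<1/2$'' overstates what the argument gives, since the term $S(\cdot)X_0$ caps the regularity at $2+s$ and strong continuity of the heat semigroup fails on the big H\"older space $C^{2+s}$, so strictly you obtain exponents below $s$; the paper's Sobolev-embedding step has the same $1/p$-loss, and in both proofs this endpoint slack is harmless because $s$ ranges over the open interval $(0,1/2)$, but it should be stated that way rather than claimed up to $1/2$.
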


While the existence and uniqueness of the mild solution follows from a standard result in \citep{Hairer2009},  the crucial insight of the theorem is the higher regularity of $\tilde{X}$ compared to $X$, which we will exploit in the statistical analysis below to show that the influence of the nonlinearity $f_A$ on the estimation of $D_A$ from data is negligible, while it clearly impacts the nonlinear dynamics of $X$.

For the proof of Theorem \ref{thm:estimate_DA} in Appendix \ref{sec:mild solution} we shall employ the language of stochastic analysis, while for modeling purposes we prefer  \eqref{eq:stochasticMeinhardt} with the physical white noise interpretation.
A more realistic model might consider multiplicative noise levels $\sigma_A$, $\sigma_I$ depending on $X$. Moreover,  also the parameters and initial conditions could be subject to noise \citep{Jilkine2011}. Here, we refrain from this generality and focus on the impact of simple additive space-time white noise in \eqref{eq:stochasticMeinhardt}. Note that neither the model proposed by Meinhardt nor other models suggested in the literature for cell repolarisation include dynamic noise so far.

\section{The effect of noise} \label{sec:effectofnoise}

In Turing-type models for pattern formation noise in the initial condition is required to leave a homogeneous steady state. Its strength determines how fast a suprathreshold level for the activator $A$ starts growing into a pattern whose wavelength can be determined by linear stability analysis. Dynamic noise, on the other hand, is expected to destabilise this pattern either over time, or very suddenly. Contrary to this intuition, we will now describe three noteworthy effects arising from moderate noise levels.  These effects were validated empirically by simulating the SPDE in \eqref{eq:stochasticMeinhardt} using a finite difference scheme as explained in Appendix \ref{sec:setup} with experimentally calibrated parameters and initial values.

\paragraph{Noise speeds up repolarisation}

In Figure \ref{fig:1}(center) we see the deterministic solution of \eqref{eq:stochasticMeinhardt} (i.e., with $\sigma_A=\sigma_I=0$) starting from a polarised state with high activator concentration in some part of the cell (near $x=0$) called 'rear'. Stimulated by the extracellular signal \eqref{eq:signal} the activator breaks down in order to reappear in an area of high signal strength (near $x=L/2$) called 'front', and the cell repolarises. Figure \ref{fig:1}(right), on the other hand, contains a typical realisation of the SPDE \eqref{eq:stochasticMeinhardt}. The evolution of activator concentration deviates considerably from the deterministic dynamics, but repolarisation is still achieved.  For a more quantitative analysis consider the mean activator concentrations

\begin{align*}
\mu_{F} (t) =  \frac{1}{\abs{S_F}}\int_{S_{F}} A(t,x) dx,\qquad \mu_{R} =  \frac{1}{\abs{ S_R}}\int_{S_{R}} A(0,x) dx,
\end{align*}
near the front $S_{F} = [L\times 0.4, L\times 0.6]$ at $t\geq 0$ and near the rear $S_{R}= [0,L\times 0.1)\cup(L\times 0.9,L]$ at time $t=0$. With this define the 'time to repolarisation'
\[
	\tau_{\gamma} = \inf \{ t\geq0: \, \mu_F (t) \geq \gamma \mu_R \}
\]
\begin{figure}
	\begin{minipage}[t]{0.5\linewidth}
	\centering
	\includegraphics[width=0.8\textwidth]{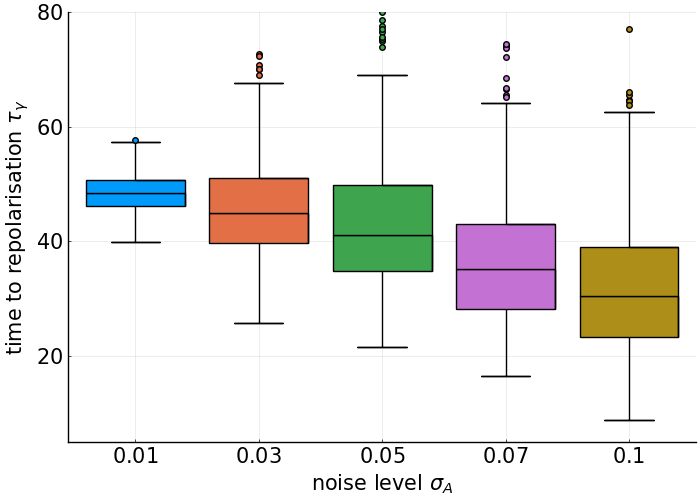}
	\par\vspace{0cm}
	\end{minipage}\hspace{-0.5cm}
	\begin{minipage}[b]{0.5\linewidth}
	\centering	
\resizebox{0.9\textwidth}{!}{
\begin{tabular}{l|cccccccccc}
\toprule
\multirow{2}{2em}{$\sigma_A$} & \multicolumn{5}{c}{$D_A \times 100$}  \\
\cline{2-6}
& 2.42 & 3.42 & $4.42$ & 5.42 & 6.42 \\
\hline
0.00 & 42.7  & 46.0 & 48.9 & 51.0 & 52.0 \\
0.01 & 42.0  & 45.4 & 48.4 & 50.8 & 51.7  \\
0.03 & 37.8 & 41.5  & 45.0 & 47.0 & 49.0  \\
0.05 & 32.5 & 36.3 & 41.1 & 42.5 & 46.2  \\
0.07 & 28.9 & 32.7 & 35.3 & 38.1 & 39.6  \\
0.10 & 24.6 & 28.1 & 30.5 & 32.6 & 33.8 \\
\end{tabular}}
\par\vspace{0.5cm}
\end{minipage}

	\caption{(left) Boxplots for the distribution of the time to repolarisation $\tau_{\gamma}$ for different noise levels $\sigma_A$ with the diffusivity $D_A=4.415\times 10^{-2}$ from Appendix \ref{sec:setup}; (right) median time to repolarisation for different noise levels and diffusivities.}
\label{fig:2}
\end{figure}as the time when the activator concentration in the front part is significantly higher than in the rear at $t=0$ depending on a threshold $\gamma>0$.  Figure \ref{fig:2} displays simulations results for the median times to respolaration obtained after 500 Monte Carlo iterations with $\gamma = 0.5$ for different activator noise levels $\sigma_A$ and across a range of diffusivites close to the value $D_A = 4.415\times 10^{-2}$ from Appendix \ref{sec:setup}. The boxplot in Figure \ref{fig:2}(left) further shows that the median, the upper and the lower quartiles clearly decrease for growing $\sigma_A$ (a linear regression on the median repolarisation times yields $50.775$ as the coefficient of the intercept and $-206.066$ as the slope), while the spread and interquartile range slowly increase. In Figure \ref{fig:2}(right) we further see that $\tau_{\gamma}$ decreases for larger $\sigma_A$ and fixed $D_A$.

This effect of the noise level on the dynamics of $A$ can be understood by a scaling argument.  Assume $\sigma_A>0$ and introduce the process $\check{X}(t)=(\check{A},\check{I})$,
\begin{align}
	\check{A}(t)=\frac{\sqrt{D_A}}{\sigma_A} A\left(\frac{t}{D_A}\right),\qquad \check{I}(t)= I\left(\frac{t}{D_A}\right),\qquad 0\leq t\leq TD_A.\label{eq:scaledA}
\end{align}
Then $\check{A}$ satisfies the SPDE
\begin{align}
\frac{\partial}{\partial t}\check{A}(t,x)=\frac{\partial^{2}}{\partial x^{2}}\check{A}(t,x)+\check{f}_{A}(\check{X}(t,x),x,\sigma_A,D_A)+\check{\xi}_{A}(t,x), \qquad \check{A}(0)=\frac{\sqrt{D_A}}{\sigma_A}A_0,
\label{eq:stochasticMeinhardt_scaled}
\end{align}
with unit diffusivity, unit noise level coefficients, the rescaled nonlinearity
\begin{align*}
	\check{f}_{A}(y,x,\sigma_A,D_A)= \frac{1}{\sigma_A \sqrt{D_A}} f_A\left(\frac{\sigma_A}{\sqrt{D_A}}y_1,y_2,x\right).
\end{align*}
and with a space-time white noise process $\check{\xi}_A$. To see the latter, note that $\xi_A(\cdot/D_A,x)$ and $\check{\xi}_A(\cdot,x)/\sqrt{D_A}$ have the same distribution.  For $y$ restricted to reasonable values close to the initial conditions and using the parameters from Appendix \ref{sec:setup} the derivative of the function $\sigma_A\mapsto \check{f}_{A}(y,x,\sigma_A,D_A)$ is strictly positive and so $\sigma_A\mapsto \check{f}_{A}(y,x,\sigma_A,D_A)$ is increasing for fixed $D_A$.  Since it is the magnitude of the nonlinearity that drives the repolarisation in comparison to the diffusivity induced by the Laplacian $\partial^2/\partial x^2$,  this agrees with the empirical observation from above that a larger noise level $\sigma_A$ speeds up the repolarisation.

The same qualitative results are obtained for parameters in the nonlinearity and initial conditions different from the ones in Appendix \ref{sec:setup}.  Moreover, we have also observed a decrease in $\tau_{\gamma}$ when considering larger inhibitor noise levels $\sigma_{I}$ or a larger signal strength $a$,  while keeping all other parameters fixed.

We conclude that repolarisation is not only stable under noise, but it is even accelerated. An interpretation of this behaviour is that the noise breaks symmetries, making the dynamics more 'turbulent' and therefore the creation of a new front is sped up.

\paragraph{Splitting of the front}

\begin{figure}
	\hspace{0.5cm}\includegraphics[width=0.43\textwidth]{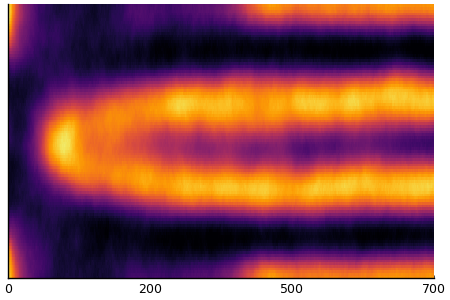}
	\hspace{1cm}
	\includegraphics[width=0.43\textwidth]{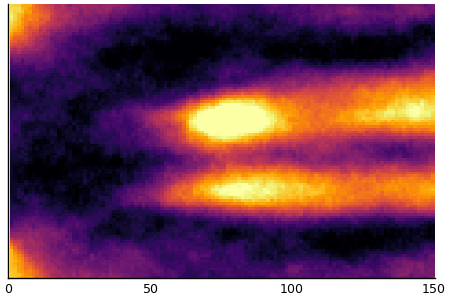}
	\caption{(left) A typical realisation of the stochastic Meinhardt model with a moderate noise level $0.015$ level on a longer time horizon; (right) a realisation of the stochastic Meinhardt model with large noise level $0.05$.}
\label{fig:4}
\end{figure}

For the deterministic Meinhardt model it has been shown by  \citep{lockley2017image} that the repolarised front may not be stable. Indeed, if the parameters, obtained from fitting data on the short timescales at which repolarisation typically occurs (120 sec), are used for long term simulations, then the front splits into several parts. This can be verified for the parameters in Appendix \ref{sec:setup}: upon repolarisation, the front splits first into two parts (around time $t = 200$) and then into three parts (around time $t = 700$).

This behaviour can still be observed in the stochastic Meinhardt model with small noise levels (cf. Figure \ref{fig:4}(left)), but both the splitting into two and also into three fronts happens much faster than in the deterministic system.  For example,  for $\sigma_{A} = 0.015$,  the front splitting into three fronts occurs already at $t = 400$ - $500$ as compared to $t=700$ for $\sigma_{A}=0$.  More strikingly,   larger noise levels may even lead to repolarisation with a sudden split without ever achieving a single stable front (cf. Figure \ref{fig:4}(right)), which has not been observed in the deterministic model before, even with different parameter choices.  While not all simulated paths show the same splitting pattern,  the intensity of sudden splits increases with growing $\sigma_A$,  with $\sigma_A=0.03$ appearing to be a threshold above which almost all paths split immediately.

Let us remark that front splitting is a common feature of amoeboid cell migration, allowing cells to explore their environment and to respond to changes in gradients more quickly.  The front experiencing a stronger increase in signal will be enhanced while the other will be retracted.  In strong signal gradients cells can move with a single front for long times (>10 minutes) \citep{Lockley_2015}. The Meinhardt model for long term simulations requires a smaller diffusivity $D_{I}$. When $D_{I}$ was reduced by $25 \%$, both deterministic and stochastic solutions produced a single stable front.

\paragraph{Diffusion interacts with noise}
\begin{figure}
	\includegraphics[width=\textwidth]{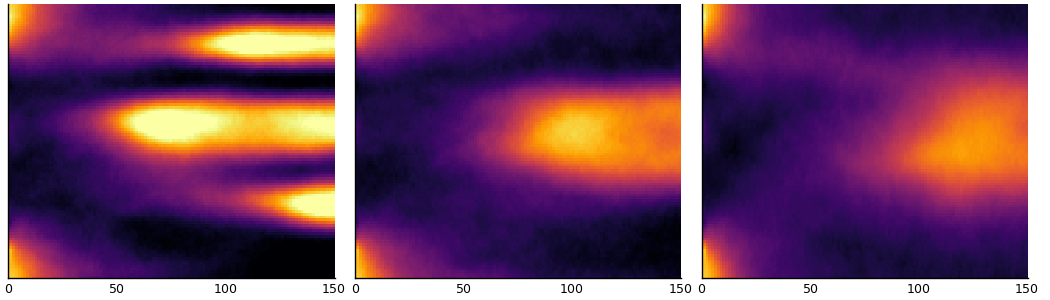}
	\caption{Three different realisations of the stochastic Meinhardt model with moderate noise level $0.02$ and with diffusivity $D_A - 0.01$ (left),  $D_A$ (center) and $D_A + 0.03$ (right), where $D_A= 4.415 \times 10^{-2}$ is as in Appendix \ref{sec:setup}.}
\label{fig:3}
\end{figure}

The rescaling leading to \eqref{eq:stochasticMeinhardt_scaled} reveals that $\sigma_A/\sqrt{D_A}$ is an important factor driving the dynamics of $A$. It suggests that larger diffusivity levels $D_A$ may counteract an increase in $\sigma_A$,  at least if we ignore the rescaling of the time horizon in \eqref{eq:scaledA}.  This can be seen from Figure \ref{fig:3}. First, the left picture shows that already a small noise level combined with a smaller diffusivity may exhibit the sudden front splits discussed in the last paragraph in the context of large noise levels (cf. Figure \ref{fig:4}(right)).  Second, comparing the plots from left to right, we notice that a larger diffusivity leads to a slower repolarisation, which is is line with the discussion from the previous paragraph.  Indeed, if $\sigma_A/\sqrt{D_A} \equiv \bar{\sigma}$ is kept constant for some $\bar{\sigma}>0$, then $D_A\mapsto \check{f}_{A}(y,x,\bar{\sigma}\sqrt{D_A},D_A)$ is clearly decreasing.  The interaction effect between $D_A$ and $\sigma_A$ is also visible in Figure \ref{fig:2}(right), since the slow down in time to  repolarisation for larger $D_A$ depends on the magnitude of the noise.

At last, let us mention that the ratio $\sigma_A/\sqrt{D_A}$ also appears in the limiting observed Fisher information (cf.  Proposition \ref{prop:FisherInfo} below) of our estimation method discussed in the next section. This means that a larger noise to diffusivity ratio not only corresponds to faster repolarisation, but also to an increase in information leading to smaller confidence intervals.

\section{Parameter estimation}\label{sec:estimation}

We derive an estimator $\hat D_{A,\delta}$ of the diffusivity $D_A$ from first principles and state its main properties. Let us assume that we can measure the activator $A$ at $M$ points $x_{k}\in\Lambda$ for $k=1,\dots,M$ over a period of time $[0,T]$.  The inhibitor $I$ can typically not be measured. Measurements of $A$ correspond to fluorescence distributions (for example of actin in \citep{Lockley_2015}) at the cell cortex and are obtained through microscopy. This means that every measurement necessarily has a minimal spatial resolution $\delta>0$ determined by the experimental setup. It can be described by a \emph{local measurement} \citep{Altmeyer2019b}, that is, a linear functional
\begin{equation}
	A_{\delta}(t,x_{k}):=A(t)*K_{\delta}(x_k)=\sc{A(t)}{K_{\delta,x_{k}}},\quad 0\leq t\leq T,\label{eq:localMeasurement}
\end{equation}
where $\sc{\cdot}{\cdot}$ denotes the $L^2(\Lambda)$ inner product and $*$ means convolution with respect to $K_{\delta}(x):=\delta^{-1/2}K(\delta^{-1}x)$
for a compactly supported function $K\in H^2(\R)$, the classical $L^2$-Sobolev space of order 2. Moreover, $K_{\delta,x_k}:=K_{\delta}(\cdot - x_k)$ corresponds to the point spread function in imaging systems. Particular examples for $K$ are bump functions (see Section \ref{sec:numerics} below). The scaling by $\delta^{-1/2}$ is irrelevant for the estimator, but normalizes $K_{\delta,x_k}$ in $L^2$-norm, that is, $\norm{K_{\delta,x_k}}=\norm{K}_{L^2(\R)}$, and eases the notation later. In contrast to the common setting in statistics for SPDEs we thus dispose here only of partial observations given by local measurements of one component $A$ in the presence of a hidden component $I$.

From \eqref{eq:stochasticMeinhardt} we find that $A_{\delta}(t,x_k)$ satisfies
\begin{equation}
\frac{\partial}{\partial t}A_{\delta}(t,x_{k})= D_{A}A_{\delta}^{\Delta}(t,x_{k})+\sc{f_A(X(t,\cdot),\cdot)}{K_{\delta,x_{k}}} +\sigma_{A}\norm{K}_{L^2(\R)}\, \xi_{A,k}(t),\label{eq:A_obs_equation}
\end{equation}
with scalar white noise (in time) $\xi_{A,k}(t)=\sc{\xi_A(t)}{K_{\delta,x_{k}}}/\norm{K}_{L^2(\R)}$, and where
\begin{equation}
A_{\delta}^{\Delta}(t,x_{k}):=\frac{\partial^{2}}{\partial x^{2}}A_{\delta}(t,x_k)=\Big\langle A(t),\frac{\partial^{2}}{\partial x^{2}}K_{\delta,x_{k}}\Big\rangle.\label{eq:LaplaceMeasurement}
\end{equation}
Neglecting the contribution of the nonlinear term in \eqref{eq:A_obs_equation} leads to a parametric estimation problem for $D_A$ with respect to the scalar processes $(A_{\delta}(t,x_k))_{0\leq t\leq T}$ for $k=1,\dots,M$. The maximum-likelihood estimator can be obtained, in principle, by Girsanov's theorem \citep{Liptser2001}, but this leads to a non-explicit filtering problem, as explained in \citep{Altmeyer2019b} for the case $M=1$. Instead, consider the modified likelihood with stochastic differentials $dA_{\delta}(t,x_{k})$ (in time)
\[
	\c L_{\delta}(D_A) = \exp\left(\frac{D_A}{\sigma_{A}^2 \norm{K}_{L^2(\R)}} \sum_{k=1}^M \left( \int_0^T  A^{\Delta}_{\delta}(t,x_k) dA_{\delta}(t,x_k)-\frac{D_A}{2}\int_0^T \left( A^{\Delta}_{\delta}(t,x_k)\right)^2 dt\right) \right).
\]
Maximising with respect to $D_A$ and assuming that we have also measurements $(A_{\delta}^{\Delta}(t,x_{k}))_{0\leq t\leq T}$ at our disposal, leads to the  \emph{augmented MLE}
\begin{equation}
\hat{D}_{A,\delta}=\frac{\sum_{k=1}^{M}\int_{0}^{T}A_{\delta}^{\Delta}(t,x_{k})dA_{\delta}(t,x_{k})}{\sum_{k=1}^{M}\int_{0}^{T}(A_{\delta}^{\Delta}(t,x_{k}))^{2}dt}.\label{eq:D_A}
\end{equation}
This extends the construction of \citep{Altmeyer2019b} to more than one pair of local measurements. Equivalently, $\hat{D}_{A,\delta}$ can
be obtained formally (that is, neglecting the term independent of $D_A$ in the quadratic expansion and interpreting $\frac{\partial}{\partial t} A_\delta dt=dA_\delta$) as minimiser of the least squares contrast
\begin{align*}
 & D_{A}\mapsto\sum_{k=1}^{M}\int_{0}^{T}\left(\tfrac{\partial}{\partial t} A_{\delta}(t,x_{k})-D_{A}A_{\delta}^{\Delta}(t,x_{k})\right)^{2}dt.
\end{align*}
With Brownian motions $W_k(t)=\int_0^t\xi_{A,k}(s)ds$ we obtain from \eqref{eq:A_obs_equation} the basic error decomposition
\begin{align}
\hat{D}_{A,\delta}
	& =D_{A}+\c I_{\delta}^{-1}\c R_{\delta}+\sigma_{A}\norm K_{L^{2}(\R)}\c I_{\delta}^{-1}\c M_{\delta},\label{eq:errorDecomp}\\
	& \text{with} \quad \c I_{\delta}  =\sum_{k=1}^{M}\int_{0}^{T}\left(A_{\delta}^{\Delta}(t,x_{k})\right)^{2}dt,\qquad\qquad  \text{(observed Fisher information)}\nonumber \\
	& \quad \,\,\,\,\quad \c M_{\delta} =\sum_{k=1}^{M}\int_{0}^{T}A_{\delta}^{\Delta}(t,x_{k})dW_{k}(t), \quad\quad\quad\qquad\qquad \text{(martingale part)}\nonumber\\
	&\quad \quad\quad \c R_{\delta}  =\sum_{k=1}^{M}\int_{0}^{T}A_{\delta}^{\Delta}(t,x_{k})\sc{f_A(X(t,\cdot),\cdot)}{K_{\delta,x_{k}}}dt. \quad \text{(nonlinear bias)}\nonumber
\end{align}
For $M=1$ and linear SPDEs with Dirichlet boundary conditions  \citep{Altmeyer2019b} show that $\c I_{\delta}\rightarrow \infty$ in probability for resolution $\delta\rightarrow 0$. We will see that this remains true in the present case with periodic boundary conditions and  fixed $M$. For independent Brownian motions $W_k$, for example when the $K_{\delta,x_k}$ have disjoint supports, the observed Fisher information $\c I_{\delta}$ corresponds to the quadratic variation of the martingale part $\c M_{\delta}$. Consistency of $\hat{D}_{A,\delta}$ is therefore expected to hold as soon as the nonlinear bias is not too large. This is shown by \citep{Altmeyer2020a} for sufficiently regular nonlinearities depending only on the observed process.  

We prove in Appendix
\ref{subsec:Results-on-estimation} for fixed $T$ and $M$ that $\hat D_{A,\delta}$ is not only a consistent estimator of $D_{A}$  for resolution levels $\delta\rightarrow0$, but also that its error satisfies a central limit theorem with rate $\delta$ (which is optimal already for the linear case in \citep{Altmeyer2019b}) and with explicit asymptotic variance.
\begin{theorem}
\label{thm:estimate_DA} Consider the setting of Theorem \ref{thm:existenceUniqueness} and let $K\in H^{2}(\R)$, $K \neq 0$, have compact support. If $\sigma_A>0$, then $\hat{D}_{A,\delta}$ is a consistent
and asymptotically normal estimator of $D_{A}$, more precisely as $\delta\rightarrow0$
\[
\delta^{-1}\left(\hat{D}_{A,\delta}-D_{A}\right)\r dN\left(0,D_{A}\frac{\Sigma}{MT}\right),\quad\Sigma=\frac{2\norm K_{L^{2}(\R)}^{2}}{\norm{\frac{\partial}{\partial x}K}_{L^{2}(\R)}^{2}}.
\]
\end{theorem}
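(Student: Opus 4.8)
The plan is to work from the error decomposition \eqref{eq:errorDecomp}, together with the splitting $A=\bar{A}+\tilde{A}$ from Theorem \ref{thm:existenceUniqueness}, where $\bar{A}(t)=\sigma_{A}\int_{0}^{t}e^{(t-r)D_{A}\Delta}\,dW_{A}(r)$ is the linear (Gaussian) part and $\tilde{A}\in C([0,T];C^{2+s}(\Lambda))$ a.s. Since the $x_{k}$ are distinct and $K$ is compactly supported, for all small enough $\delta$ the functions $K_{\delta,x_{k}}$ have pairwise disjoint supports, so the Brownian motions $W_{1},\dots,W_{M}$ in \eqref{eq:A_obs_equation} are independent and $\langle\c M_{\delta}\rangle_{T}=\c I_{\delta}$; I restrict to such $\delta$ throughout. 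The argument then has four steps: (i) identify the deterministic limit of $\delta^{2}\c I_{\delta}$; (ii) a central limit theorem for the martingale part $\delta\c M_{\delta}$; (iii) show the nonlinear bias $\c R_{\delta}$ is of lower order; (iv) combine via Slutsky's lemma.

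For (i), write $\bar{A}_{\delta}^{\Delta}(t,x_{k})=\sc{\bar{A}(t)}{\Delta K_{\delta,x_{k}}}$, a centered Gaussian process with $\E[\bar{A}_{\delta}^{\Delta}(t,x_{k})^{2}]=\sigma_{A}^{2}\int_{0}^{t}\norm{\Delta e^{uD_{A}\Delta}K_{\delta,x_{k}}}^{2}\,du$. Expanding $K_{\delta,x_{k}}$ in the Fourier basis of $\Lambda$, using $\int_{0}^{\infty}\lambda_{n}^{2}e^{-2uD_{A}\lambda_{n}}\,du=\lambda_{n}/(2D_{A})$, substituting $\omega=2\pi n\delta/L$, and passing from the resulting Riemann sum to $\int_{\R}\omega^{2}|\hat{K}(\omega)|^{2}\,d\omega=2\pi\norm{\partial_{x}K}_{L^{2}(\R)}^{2}$ (Plancherel), one gets
\[ \int_{0}^{\infty}\norm{\Delta e^{uD_{A}\Delta}K_{\delta,x_{k}}}^{2}\,du=\frac{\norm{\partial_{x}K}_{L^{2}(\R)}^{2}}{2D_{A}\delta^{2}}\,(1+o(1)), \]
uniformly in $x_{k}$. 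Since the integrand concentrates on the time scale $u\sim\delta^{2}$, integrating over $t\in[0,T]$ gives $\E\big[\sum_{k}\int_{0}^{T}\bar{A}_{\delta}^{\Delta}(t,x_{k})^{2}\,dt\big]=c\,\delta^{-2}(1+o(1))$ with $c:=MT\sigma_{A}^{2}\norm{\partial_{x}K}_{L^{2}(\R)}^{2}/(2D_{A})$; a Wick-formula variance estimate (the covariance $\E[\bar{A}_{\delta}^{\Delta}(t',x_{k})\bar{A}_{\delta}^{\Delta}(t,x_{k})]$ is $O(\delta^{-2})$ but decays for $|t'-t|\gg\delta^{2}$, so the variance is $O(\delta^{-2})=o(\delta^{-4})$) upgrades this to convergence in probability. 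Finally, as $\tilde{A}(t)\in C^{2}(\Lambda)$, two integrations by parts (no boundary terms on the torus) give $|\tilde{A}_{\delta}^{\Delta}(t,x_{k})|=|\sc{\Delta\tilde{A}(t)}{K_{\delta,x_{k}}}|\le\norm{\Delta\tilde{A}}_{C([0,T]\times\Lambda)}\norm{K_{\delta,x_{k}}}_{L^{1}}=O_{P}(\delta^{1/2})$ uniformly in $t$; expanding $\c I_{\delta}$ in $A_{\delta}^{\Delta}=\bar{A}_{\delta}^{\Delta}+\tilde{A}_{\delta}^{\Delta}$, the remaining contributions are of order $O_{P}(\delta^{-1/2})$ and $O_{P}(\delta)$, so $\delta^{2}\c I_{\delta}\to c$ in probability.

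For (ii), the process $t\mapsto\delta\sum_{k=1}^{M}\int_{0}^{t}A_{\delta}^{\Delta}(r,x_{k})\,dW_{k}(r)$ is a continuous $(\c F_{t})$-martingale — the integrand is adapted and the $W_{k}$ are $(\c F_{t})$-Brownian motions — with quadratic variation at time $T$ equal to $\delta^{2}\c I_{\delta}$, which by (i) tends to the deterministic constant $c$; the central limit theorem for continuous martingales with deterministic limiting bracket (as used in \citep{Altmeyer2019b}) then yields $\delta\c M_{\delta}\r{d}N(0,c)$. For (iii), since $X\in C([0,T];C^{s}(\Lambda;\R^{2}))$ a.s. the path $X$ is bounded on $[0,T]\times\Lambda$, and as $f_{A}$ is continuous there (for $\zeta_{I}>0$ the denominator in \eqref{eq:nonlinearities} is bounded away from $0$) we have $B:=\sup_{[0,T]\times\Lambda}|f_{A}(X(t,x),x)|<\infty$ a.s.; hence $|\sc{f_{A}(X(t,\cdot),\cdot)}{K_{\delta,x_{k}}}|\le B\norm{K_{\delta,x_{k}}}_{L^{1}}=O_{P}(\delta^{1/2})$ uniformly in $t$, and together with $\int_{0}^{T}|A_{\delta}^{\Delta}(t,x_{k})|\,dt\le T^{1/2}\c I_{\delta}^{1/2}=O_{P}(\delta^{-1})$ this gives $\c R_{\delta}=O_{P}(\delta^{-1/2})$. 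For (iv), \eqref{eq:errorDecomp} reads
\[ \delta^{-1}\big(\hat{D}_{A,\delta}-D_{A}\big)=\delta^{-1}\c I_{\delta}^{-1}\c R_{\delta}+\sigma_{A}\norm{K}_{L^{2}(\R)}\,(\delta^{2}\c I_{\delta})^{-1}(\delta\c M_{\delta}), \]
where the first summand is $O_{P}(\delta^{1/2})\to0$ (using $\c I_{\delta}^{-1}=O_{P}(\delta^{2})$ and $\c R_{\delta}=O_{P}(\delta^{-1/2})$; the unscaled bounds $\c I_{\delta}^{-1}\c R_{\delta}=O_{P}(\delta^{3/2})$ and $\c I_{\delta}^{-1}\c M_{\delta}=O_{P}(\delta)$ give consistency), while Slutsky's lemma applied to the second summand with $\delta^{2}\c I_{\delta}\to c$ in probability and $\delta\c M_{\delta}\r{d}N(0,c)$ produces the limit $N\big(0,\sigma_{A}^{2}\norm{K}_{L^{2}(\R)}^{2}/c\big)$; since $\sigma_{A}^{2}\norm{K}_{L^{2}(\R)}^{2}/c=2D_{A}\norm{K}_{L^{2}(\R)}^{2}/\big(MT\norm{\partial_{x}K}_{L^{2}(\R)}^{2}\big)=D_{A}\Sigma/(MT)$, this is exactly the claimed statement.

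The main obstacle is Step (i): pinning down the exact constant $c$ in $\delta^{2}\c I_{\delta}\to c$ requires the semigroup/Fourier scaling computation above, and one must additionally control the variance of the Gaussian quadratic functional $\sum_{k}\int_{0}^{T}\bar{A}_{\delta}^{\Delta}(t,x_{k})^{2}\,dt$ — essentially a short-range-dependence estimate for its covariance kernel at scale $\delta^{2}$. A second point requiring care is that in $\c M_{\delta}$ the integrand $A_{\delta}^{\Delta}(t,x_{k})$ is correlated with the integrator $W_{k}$ (it depends on $\xi_{A}$, and via the nonlinearity also on $\xi_{I}$), so $\c M_{\delta}$ cannot be treated as a Gaussian integral against a deterministic kernel; the martingale CLT with deterministic limiting bracket is the appropriate tool and applies cleanly once Step (i) is in place.
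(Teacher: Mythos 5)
Your proposal is correct and its architecture coincides with the paper's: the error decomposition \eqref{eq:errorDecomp}, the splitting $A=\bar{A}+\tilde{A}$ from Theorem \ref{thm:existenceUniqueness}, disjoint supports of the $K_{\delta,x_{k}}$ for small $\delta$ so that $\langle\c M_{\delta}\rangle_{T}=\c I_{\delta}$, the bound $|\tilde{A}_{\delta}^{\Delta}(t,x_{k})|\lesssim\norm{K_{\delta,x_{k}}}_{L^{1}}=\c O_{\P}(\delta^{1/2})$, the Cauchy--Schwarz bound $\c R_{\delta}=\c O_{\P}(\delta^{-1/2})$, and Slutsky's lemma are exactly the content of the paper's Proposition \ref{prop:FisherInfo}(i)--(iv) and the proof of Theorem \ref{thm:estimate_DA}. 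You differ in two sub-steps. First, you identify the Fisher-information constant by expanding $K_{\delta,x_{k}}$ in the Fourier basis of the torus and passing to a Riemann sum via Plancherel, whereas the paper uses the exact semigroup identity $2\int_{0}^{t}S_{A}(2s)D_{A}\Delta K_{\delta,x_{k}}\,ds=S_{A}(2t)K_{\delta,x_{k}}-K_{\delta,x_{k}}$ together with $\sc{K_{\delta,x_{k}}}{\Delta K_{\delta,x_{k}}}=-\delta^{-2}\bnorm{\tfrac{\partial}{\partial x}K}_{L^{2}(\R)}^{2}$ and contractivity; the paper's route needs no Riemann-sum or tail-of-the-$u$-integral control, it also disposes of your two admitted loose ends (the ``concentration at scale $u\sim\delta^{2}$'' step and the Wick/short-range-dependence variance bound, which the paper settles by the same identity plus Cauchy--Schwarz, giving $\mathrm{Var}(\bar{\c I}_{\delta})\lesssim\E[\bar{\c I}_{\delta}]\lesssim\delta^{-2}$), and it is what keeps the argument independent of the explicit spectrum, hence valid for general boundary conditions and domains as claimed in Remark \ref{rem:generalProof}, while your Fourier computation is tied to the periodic Laplacian. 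Second, for the Gaussian limit of the martingale part you invoke the CLT for continuous martingales with deterministic limiting bracket, while the paper uses the Dambis--Dubins--Schwarz time change $\c M_{\delta}=\bar{w}_{\c I_{\delta}}$ combined with $\c I_{\delta}/\E[\bar{\c I}_{\delta}]\to1$; these are interchangeable. Your constant $c$ equals the paper's $\kappa$ and the resulting variance $\sigma_{A}^{2}\norm{K}_{L^{2}(\R)}^{2}/c=D_{A}\Sigma/(MT)$ matches the statement, so there is no gap of substance.
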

The asymptotic variance in Theorem \ref{thm:estimate_DA} decreases for more observations $M$ and for a growing time horizon $T$, but is independent of the noise level $\sigma_A$, the initial value, the nonlinearity $f_A$ and the inhibitor $I$. This robustness is particularly important in modelling realistic nonlinear dynamics such as \eqref{eq:stochasticMeinhardt}, which are subject to model uncertainties in parameters and even in the form of the equation. In fact, the proof reduces the estimation problem to the linear case (where $f_A=0$) and with zero initial value. This is possible, because the nonlinear bias is asymptotically of much smaller order than the martingale part in \eqref{eq:errorDecomp}. The dependence on $\delta$ is comparable to the presence of a zero order term in a linear SPDE as opposed to a first order linearity, that is, a transport term, which can induce an asymptotic bias \citep{Altmeyer2019b}.
\begin{remark}\label{rem:generalProof}
The proof of Theorem \ref{thm:estimate_DA} is inspired by Theorem 5.3 of  \citep{Altmeyer2019b} and Theorem 3 of  \citep{Altmeyer2020a} for parametric diffusivity and spatially homogeneous noise, but  is significantly shorter and more transparent.  While we focus here on the stochastic Meinhardt model \eqref{eq:stochasticMeinhardt},  the proof of the theorem applies without any changes to arbitrary (even unbounded) domains in $\R^d$, general boundary conditions and any nonlinearity $f_A$ satisfying $t\mapsto f_{A}(X(t,\cdot ),\cdot)\in C([0,T];C(\Lambda))$, $\tilde{A}\in C([0,T];C^{2}(\Lambda))$ $\P$-almost surely for $\tilde{X}=(\tilde{A},\tilde{I})$, where $\tilde{X}$ is the perturbation process in Theorem \ref{thm:existenceUniqueness}.

\end{remark}

\begin{remark}\label{rem:noNoise}
It is interesting that the robustness of the estimator $\hat{D}_{A,\delta}$ to nonlinear perturbations $f_A$ is an impact of the driving noise process. If there is no noise, that is, $\sigma_A=\sigma_I=0$, then $A(t)\in C^{2}(\Lambda)$, $f_A(X(t,\cdot),\cdot)\in C(\Lambda)$ by classical theory for parabolic PDEs \citep{Evans2010} or argued as in the proof of Theorem \ref{thm:existenceUniqueness}, which does not assume nonvanishing noise. This implies by convolution approximation uniformly in $0\leq t\leq T$ as $\delta\rightarrow 0$
\begin{align*}
	& \sc{f_A(X(t,\cdot),\cdot)}{\delta^{-1/2}K_{\delta,x_{k}}}\rightarrow f_{A}(X(t,x_k),x_k)\int_{\R}K(x)dx,\\
	& \delta^{-1/2}A_{\delta}^{\Delta}(t,x_{k}) =\Big\langle \frac{\partial^{2}}{\partial x^{2}}A(t,\cdot),\delta^{-1/2}K_{\delta,x_{k}}\Big\rangle \rightarrow \frac{\partial^{2}}{\partial x^{2}}A(t,x_k)	\int_{\R}K(x)dx.
\end{align*}
From this and the basic error decomposition \eqref{eq:errorDecomp} it follows then, assuming $\int_{\R}K(x)dx\neq 0$ and $\frac{\partial^{2}}{\partial x^{2}}A(t,x_k)\neq 0$ for at least on $x_k$, that $\hat{D}_{A,\delta}-D_{A}$ converges to a non-zero constant. On the other hand, in the linear PDE case with $f_A= 0$ and with $\sigma_A=\sigma_I=0$, we have exactly $\hat{D}_{A,\delta}=D_A$ and there is no estimation error.
\end{remark}
As a consequence of Theorem \ref{thm:estimate_DA} and Slutsky's Lemma we can easily construct asymptotic confidence intervals for $D_A$.
\begin{corollary}\label{cor:confInterval1}
Consider the setting of Theorem \ref{thm:estimate_DA}. For $0<\alpha<1$ an asymptotic
confidence interval for $D_{A}$ with asymptotic
coverage $1-\alpha$ as $\delta\rightarrow0$ is given by
\begin{align*}
I_{1-\alpha} & =\left[\hat{D}_{A,\delta}-\frac{\delta}{(MT)^{1/2}}\left(\hat{D}_{A,\delta}\Sigma\right)^{1/2}q_{1-\alpha/2},\hat{D}_{A,\delta}+\frac{\delta}{(MT)^{1/2}}\left(\hat{D}_{A,\delta}\Sigma\right)^{1/2}q_{1-\alpha/2}\right],
\end{align*}
with the standard normal $(1-\alpha/2)$-quantile $q_{1-\alpha/2}$.
\end{corollary}

While the confidence interval $I_{1-\alpha}$ requires the knowledge of $K$ as part of $\Sigma$ from Theorem \ref{thm:estimate_DA},  we can obtain a fully data driven confidence interval.  Indeed,  noting that the quadratic variation $QV=T\sigma_A^2 \norm{K}^2_{L^2(\R)}$ of $A_\delta(\cdot,x_k)$ in \eqref{eq:A_obs_equation} at time $T$ is identified from observing the trajectories of $(A_{\delta}(t,x_k))_{0\leq t\leq T}$ in continuous time and observing that Proposition \ref{prop:FisherInfo}(i,ii,iii) below provides the convergence $\delta^2\mathcal{I}_{\delta}\rightarrow \,QV\cdot M\cdot(D_A \Sigma)^{-1}$ in probability, we obtain the following completely data-driven confidence intervals.

\begin{corollary}\label{cor:confInterval2}
As in Corollary \ref{cor:confInterval1}, for $0<\alpha<1$ an asymptotic
confidence interval for $D_{A}$ with asymptotic
coverage $1-\alpha$ as $\delta\rightarrow0$ is given by
\begin{align*}
\tilde{I}_{1-\alpha} & =\left[\hat{D}_{A,\delta}-\left(\frac{QV}{T\c I_{\delta}}\right)^{1/2}q_{1-\alpha/2},\hat{D}_{A,\delta}+\left(\frac{QV}{T\c I_{\delta}}\right)^{1/2}q_{1-\alpha/2}\right].
\end{align*}
\end{corollary}

\section{Application to synthetic data}\label{sec:numerics}

\begin{figure}

\includegraphics[width=1.0\linewidth,valign=m]{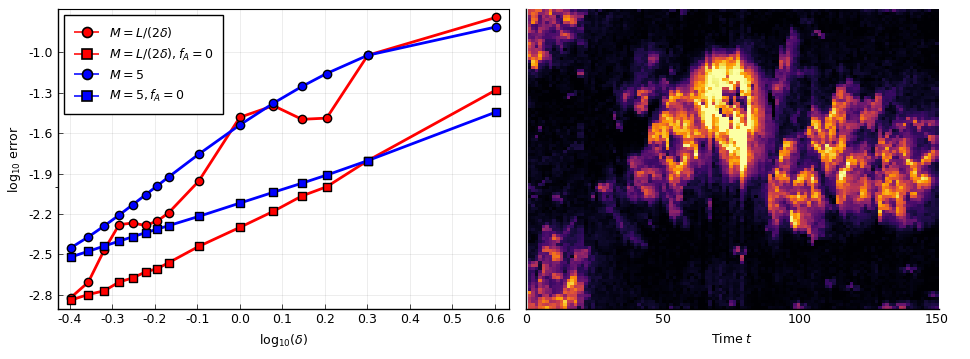}

\caption{ {\label{fig:5} (left) $\log_{10}$-$\log_{10}$ plot of root mean squared estimation errors for different $M$ with $f_A$ and $A_0$ calibrated to experimental data and in the linear case with $f_A=0$ and zero initial condition $A_0$; (right) heatmap for 100 measurements of the activator along the cell contour for a single cell. }}

 \end{figure}

Let us apply the results of the previous section.  Synthetic data of local measurements are obtained by simulating the SPDE \eqref{eq:stochasticMeinhardt}  as in Section \ref{sec:effectofnoise} with experimentally calibrated $D_A=4.415 \times 10^{-2}$ and with $\sigma_A=0.02$, $\sigma_I=0$.  As a typical example for the kernel $K$ we use the bump function
\begin{equation}
	K(x) = \exp\left(-\frac{10}{1-x^2}\right)\mathbf{1}_{[-1,1]}(x), \quad x\in\R.
\end{equation}
For different resolutions $\delta\in [L \times 0.017,L \times 0.1]$ and different $M$, local measurements $A_{\delta}(t_j,x_k)$, $A^{\Delta}_{\delta}(t_j,x_k)$ are obtained according to \eqref{eq:localMeasurement} and \eqref{eq:LaplaceMeasurement} on regular grids $x_k=Lk/M$, $k=0,\dots,M-1$, $t_j = Tj/N$, $j=0,\dots,N$. For these local measurements, the augmented MLE $\hat{D}_{A,\delta}$ is computed.

Figure \ref{fig:5}(left) shows a $\log_{10}$-$\log_{10}$ plot of root mean squared estimation errors for $\hat D_{A,\delta}$ obtained after 500 Monte Carlo iterations  for $T=30$, $L=20$, $m=2000$ points in space and $n=m^2/4$ points in time, with $N=n/100$. We show results for two different choices of $M$, once fixed at $M=5$ for all $\delta$ and once $M\equiv M(\delta)=L/(2\delta)$. In both cases, the supports of the kernels $K_{\delta,x_k}$ are non-overlapping. For comparison, we also added root mean squared estimation errors for the linear SPDE with $f_A=0$ and with zero initial value $A_0=0$. 

The estimation errors are significantly smaller in the linear case,  for both choices of $M$, therefore demonstrating clearly a strong non-asymptotic effect of the nonlinearity and the initial value. This difference disappears as $\delta\rightarrow 0$. In the linear case, the errors are very well aligned with the asymptotic standard error (when multiplied with $\delta$) as predicted by Theorem \ref{thm:estimate_DA}. This allows us to read off the rates of convergence from the $\log_{10}$-$\log_{10}$ plots: for fixed $M$ the rate of convergence is $\delta$, and for $M(\delta)=L/(2\delta)$ it is approximately $\delta/\sqrt{M}\approx \delta^{3/2}$. These rates are not attained yet for the considered range of $\delta$ with non-vanishing $f_A$ and with a non-zero initial value. Note that for $\delta=L \times 0.017$ we only obtain $M(\delta)=30$ non-overlapping local measurements, whereas in the real-data example in the next section we have a much smaller $\delta$ and 100 measurements,  so a better performance can be expected.  Finer Monte-Carlo simulations would require significantly more computational efforts.

We have also verified the confidence intervals $I_{1-\alpha}$ and $\tilde{I}_{1-\alpha}$ of Corollaries \ref{cor:confInterval1} and \ref{cor:confInterval2} empirically for different $\alpha$.  $\tilde{I}_{1-\alpha}$ is always smaller than $I_{1-\alpha}$ due to the strong non-asymptotic effect of the nonlinearity leading to overestimation of the asymptotic Fisher information (cf. Proposition \ref{prop:FisherInfo}(i)).  Again, this difference disappears as $\delta\rightarrow 0$.  Because of this,  coverage near the nominal level is obtained in the linear case for all $\delta$,  while in the nonlinear case good coverage requires relatively small $\delta$.  For example,  with respect to $M(\delta)$, $\delta=L \times 0.017$ and $\alpha=0.1$,  the interval $I_{90}$ covers $D_A$  for 85\% of the samples, and for $\alpha=0.05$ with $I_{95}$ this increases to 92\%. In one sample, with $D_A$ was estimated at $4.372 \times 10^{-2}$ with confidence bounds $\pm 0.162 \times 10^{-2}$ according to $I_{90}$, and with $\pm 0.193 \times 10^{-2}$ according to $I_{95}$. Moreover,  a larger time horizon $T$ decreases the overall estimation errors, as predicted by Theorem \ref{thm:estimate_DA},  but coverage in the nonlinear case is again adversely affected,  which indicates that the error induced by the nonlinearity in estimating the asymptotic Fisher information (cf.  equation \eqref{eq:A_tilde}) may not decrease for larger $T$. Again,  coverage improves for smaller $\delta$.


Further unreported simulations show that pointwise estimation results, that is, with $M=1$, are not homogeneous in space, and are affected adversely at locations $x_k$ where repolarisation leads to fast changes in the activator $A$ (cf. Figure \ref{fig:1}). This effect becomes smaller as $\delta\rightarrow 0$ because the nonlinearity plays no role in the asymptotic error according to Theorem \ref{thm:estimate_DA}. While the augmented MLE can account for these local fluctuations, we have noticed that a discretised version of the spectral estimator \citep{Huebner1995}, obtained from a discrete Fourier transform of the local measurements, is affected considerably by the nonlinearity and does not perform well, unless the number of measurements $M$ is increased significantly. This leads, however, to overlapping supports of the $K_{\delta,x_k}$, which may not be realistic in experimental data, for example in case of the imaging data in the next section. For more details and different aspects of parameter estimation in stochastic reaction-diffusion equations using the spectral estimator see  \citep{Pasemann2020} in a related application to cell motility.

\section{Application to experimental data}\label{sec:realData}

We apply the augmented MLE to experimental single cell data from 18 different single cells as described in Appendix \ref{sec:setup}.  Figure \ref{fig:5}(right) shows the heatmap for one such cell. Compared to the average over these 18 cells displayed in Figure \ref{fig:1}(left), the activator behaves much more random, with cell fronts  forming and disappearing quickly.


We assume that each data point corresponds to a local measurement $A_{\delta}(t_j,x_k)$ for $x_k = Lk/M$, $k=0,\dots,M-1$ for $M=100$ and with $t_j=Tj/N$, $j=0,\dots,N$. Here, $T=N$ ranges from $30$ to $256$ for different cells and we choose $L=20$, as above. Using \eqref{eq:LaplaceMeasurement}, $A^{\Delta}_{\delta}(t_j,x_k)$ is computed by a finite difference approximation of the second derivative. This yields an approximated version of the augmented MLE $\hat{D}_{A,\delta}(\pm \hat{\kappa}_{1-\alpha})$, where $\tilde{I}_{1-\alpha}=[\hat{D}_A - \hat{\kappa}_{1-\alpha}, \hat{D}_A + \hat{\kappa}_{1-\alpha}]$ is the confidence interval from Corollary \ref{cor:confInterval2}, which is obtained from the observed Fisher information given the approximated $A^{\Delta}_{\delta}(t_j,x_k)$ and from using the averaged realised variations
\begin{equation*}
	RV = M^{-1}\sum_{k=0}^{M-1} \sum_{j=1}^N (A_{\delta}(t_j,x_k)-A_{\delta}(t_{j-1},x_k))^2
\end{equation*}
to approximate the quadratic variation $QV = T\sigma_A^2 \norm{K}^2_{L^2(\R)}$.

For the single cell data displayed in Figure \ref{fig:5}(right) we estimate $D_A$  at $T=50$ as $1.450 \times 10^{-2}(\pm 0.048\times 10^{-2})$, at $T=150$ as $1.600 \times 10^{-2}(\pm 0.031\times 10^{-2})$ and at $T=250$ as $1.620 \times 10^{-2}(\pm 0.024\times 10^{-2})$.  This shows that the estimates stabilise for growing $T$ with lower and upper confidence bounds decreasing approximately according to $1/\sqrt{T}$ as indicated by Theorem \ref{thm:estimate_DA}.  Across the 18 cells we obtain similar results.  The mean estimated diffusivity at $T=30$ (which is the shortest time length among the 18 cells) is $1.76\times 10^{-2}$. We therefore obtain estimated diffusivities of a comparable order to previously reported results in the literature, for example by \citep{fukushima2019excitable} in a similar (deterministic) setup, but also  by  \citep{Lockley_2015}, who fitted the same data used in the present paper to the deterministic Meinhardt model by a least squares (profile likelihood) approach.  Interestingly,  \citep{Lockley_2015} obtain much smaller diffusivities for the single cell data than for the averaged data across the 18 cells displayed in Figure  \ref{fig:1}(left).  The augmented MLE, however, yields for the averaged data $1.9\times 10^{-2}$ (until $T=30$), which is close to the averaged estimated diffusivities from above.  We conclude that the augmented MLE is robust to the averaging of data for different cells with similar diffusivities.  On the other hand,  averaging for different cells yields significantly smaller confidence bounds. This can be expected in our setup, since it is reasonable to assume that the driving noise processes $\xi_{A}$ in \eqref{eq:stochasticMeinhardt} are independent for different cells.  In this case,  as the nonlinearity $f_A$ plays only a minor role in estimating $D_A$ as discussed after Theorem \ref{thm:estimate_DA},  the averaged observations satisfy an SPDE with the same diffusivity, but with the modified noise level $\sigma_A/\sqrt{18}$.

The time to repolarisation in the 18 cells (i.e., $\tau_{\gamma}$ with $\gamma = 0.5$) ranges from $15$ to $49$ with median $22$, which are reasonable values for \textit{Dictyostelium} cells  \cite{dalous2008reversal}. We have also simulated the stochastic Meinhardt model using the averaged estimated diffusivities, keeping the same parameters for $f_A$ and the same initial conditions as in Section \ref{sec:numerics}, and computed the time to repolarisation as in Section \ref{sec:effectofnoise}. The behaviour of $\tau_\gamma$ is similar, with its median decreasing for growing $\sigma_{A}$. For example, for $\gamma = 0.5$, we get $\tau_{\gamma} = 40.38$ in the deterministic case, the median value $\tau_{\gamma} = 36.89$ when $\sigma_{A} = 0.02$ and the median value $\tau_{\gamma} = 23.29$ for $\sigma_{A} = 0.10$.

Since the estimated diffusivities are smaller than the diffusivity used for our previous simulations, several new fronts were typically built upon repolarisation. As discussed in Section \ref{sec:effectofnoise}, the reduction of $D_{I}$ helps to create one stable front again.

\section{Discussion}\label{sec:discussion}

We have extended parameter estimation methods developed for linear SPDEs to systems of stochastic reaction-diffusion equations with periodic boundary conditions. The inclusion of noise into biological models is becoming increasingly relevant, owing to the availability of high resolution measurement devices and improved computational methods.


As a concrete application we have estimated the diffusivity in a stochastic Meinhardt model for cell repolarisation. The estimator performed well on synthetic data and  provided reasonable estimates across measurements for 18 single cells. For the considered SPDE model, we have demonstrated through simulations that moderate levels of dynamic noise do not destroy the pattern formation mechanism, but amplify it, leading to faster repolarisation and front splitting. This is achieved despite the simple activator-inhibitor structure of \eqref{eq:stochasticMeinhardt} and using only space-time white noise. We believe that this is the starting point for studying more detailed models for cell repolarisation based on SPDEs  with spatially nonhomogeneous and possibly multiplicative noise. In this way we hope to obtain models that recover the variations within cells and between different cell populations better.

The estimation methods developed here are not limited to the specific SPDE model under consideration, but can also be applied to other models for cell motility such as  \citep{fukushima2019excitable, Pasemann2020}, and even to general systems of stochastic reaction-diffusion equations under regularity conditions for the nonlinearity. This flexibility will be essential in calibrating SPDE models to experimental data.


\appendix
\section{Proofs}\label{subsec:Existence-of-a}

In the following, we consider for fixed $T<\infty$ a filtered probability space $(\Omega,\c F, (\c F_t)_{0\leq t\leq T},\P)$, where the filtration $(\c F_t)_{0\leq t\leq T}$ is generated by the independent Brownian motions $W_k$ in \eqref{eq:errorDecomp}.  Unless stated otherwise, all limits are taken as $\delta \rightarrow 0$. $C$ always denotes a generic positive constant which may change from line to line. $A\lesssim B$ means $A\leq CB$ and $A_n = \c O_{\P}(B_n)$ means that $A_n/B_n$ is tight, that is, $\sup_n\P(|A_n| > C|B_n|) \rightarrow 0$ as $C\to\infty$. Recall that $z\in L^{p}(\Lambda)=L^{p}(\Lambda;\R)$  for $p\geq 1$ and  $\Lambda=\R/(L\Z)$ means that $z$ is $L$-periodic and $z\in L^p([0,L])$. We also write $\Delta=\partial^{2}/\partial x^{2}$ to
denote the Laplacian on $L^{2}(\Lambda)$ with periodic boundary conditions.

\subsection{Existence of a unique solution}\label{sec:mild solution}

\paragraph{Reformulation and mild solution}

Let us first reformulate the Meinhardt model as an SPDE in the space
$L^2(\Lambda; \R^2)$.  Let $S_A$ and $S_I$ denote the analytic and self-adjoint semigroups generated by $D_{A}\Delta$ and $D_{I}\Delta$ on $L^2(\Lambda)$, cf. \citep[Section 2.3]{berglund2019introduction}. For smooth $z=(z_{1},z_{2})\in L^2(\Lambda; \R^2)$ consider also the differential operator $\c Az =\left(D_{A}\Delta z_{1},D_{I}\Delta z_{2}\right)$ with periodic boundary conditions, generating the semigroup $S(t)z=(S_A(t)z_1,S_I(t)z_2)$ on $L^2(\Lambda; \R^2)$.  Let $B:L^{2}(\Lambda;\R^{2})\rightarrow L^{2}(\Lambda;\R^{2})$,
$Bz=(\sigma_{A}z_{1},\sigma_{I}z_{2})$ and define $F:L^{2}(\Lambda;\R^{2})\rightarrow L^{2}(\Lambda;\R^{2})$ by
\[
F(z)(x) = (F_A(z)(x), F_I(z)(x)) = \left(f_A(z(x),x),f_I(z(x),x)\right).
\]
Consider two independent cylindrical Wiener processes $W_A$, $W_I$ on $L^2(\Lambda)$ such that $W(t)=(W_A(t),W_I(t))$ is a cylindrical Wiener process on $L^2(\Lambda;\R^2)$, and formally $dW(t)=(\xi_{A}(t),\xi_{I}(t))dt$. Solving \eqref{eq:stochasticMeinhardt} then corresponds to finding a solution   $X=(A,I)$ in $L^2(\Lambda;\R^2)$ to the SPDE
\begin{equation}
\begin{cases}
dX(t)=\left(\mathcal{A}X(t)+F(X(t))\right)dt+B\,dW(t),\quad0<t\leq T,\\
X(0)=(A_{0},I_{0}).
\end{cases}\label{eq:SPDE}
\end{equation}
We use the mild solution concept of \citep{DaPrato2014}. We will show that there exists a process $X$ taking values in $L^{2}(\Lambda;\R^{2})$ satisfying
\begin{equation}
X(t)=S(t)X(0)+\int_{0}^{t}S(t-s)F(X(s))ds+\int_{0}^{t}S(t-s)BdW(s).\label{eq:X}
\end{equation}
%

\paragraph{Linear and nonlinear parts}

The idea is to obtain the existence of $X$ from $X:=\bar{X}+\tilde{X}$ with the stochastic convolution $\bar{X}(t):=\int_{0}^{t}S(t-s)BdW(s)$ and where $\tilde{X}$ satisfies
\begin{equation}
\tilde{X}(t)=S(t)X(0)+\int_{0}^{t}S(t-s)F(\bar{X}(s)+\tilde{X}(s))ds,\quad0\leq t\le T.\label{eq:X_tilde}
\end{equation}
The process $\bar{X}$ is the unique mild solution to the linear SPDE \eqref{eq:SPDE} (with $F\equiv 0$ and $X(0)=0$) and takes values in $L^{2}(\Lambda;\R^{2})$ (apply, for example, \citep[Theorem 5.4]{DaPrato2014} separately to the component processes $\bar{A}$, $\bar{I}$). Finding a process $\tilde{X}$ solving \eqref{eq:X_tilde}, on the other hand, means equivalently finding a solution to the nonlinear PDE with random coefficients
\begin{equation}
\frac{\partial}{\partial t}\tilde{X}(t)=\c A\tilde{X}(t)+F(\bar{X}(t)+\tilde{X}(t)),\quad0<t\leq T,\quad\tilde{X}(0)=X(0).\label{eq:X_tilde_pde}
\end{equation}
Since this equation does not depend explicitly on the noise process $W$ anymore,
it can be solved for a fixed realisation of $\bar{X}$. The proof follows from a classical fixed point argument.
\begin{proof}[Proof of Theorem \ref{thm:existenceUniqueness}]
By a fixed point argument \citep[Theorem 6.4]{Hairer2009}, noting that $X(0)\in C(\Lambda;\R^2)$ and $F$ is globally Lipschitz continuous from $C(\Lambda;\R^2)$ to $C(\Lambda;\R^2)$, we conclude that \eqref{eq:X} and \eqref{eq:X_tilde} have unique solutions $X$ and $\tilde{X}$ in $C([0,T];C(\Lambda;\R^2))$. In order to obtain the higher regularities, we first show that the linear process $\bar{X}$ takes values in $C^{s'}(\Lambda;\R^2)$, $0<s'<1/2$. For $r\in\R,\,p\geq1$ consider the Bessel potential spaces on the 1D-torus $\Lambda=\R/(L\Z)$
\[
W^{r,p}(\Lambda):=\left\{ u\in L^{p}(\Lambda):\norm u_{r,p}<\infty\right\},
\]
with norm $\norm u_{r,p}=\norm{(I-\Delta)^{r/2}u}_{L^{p}(\Lambda)}$ \citep{Triebel2010a,Cirant2019}. Note that $I-\Delta$ is a strictly positive operator and thus $(I-\Delta)^{-1}$ is a bounded operator on $L^{p}(\Lambda)$
for periodic boundary conditions, while $(-\Delta)^{-1}$ is not. The Bessel potential spaces differ from the classical Sobolev spaces, but allow for a Sobolev embedding theorem (see for example \citep[Section 2.3]{Cirant2019} or \citep{Triebel2010a}). We can now apply \citep[Proposition 30]{Altmeyer2020a} to conclude that $\bar{A}$, $\bar{I}\in C([0,T];W^{s',p}(\Lambda))$ for all $0<s'<1/2$ and $p\geq 2$ for the component processes of $\bar{X}$. Note that the proposition is stated for the Bessel potential spaces with respect to the Dirichlet Laplacian, but the statement and its proof remain true using the corresponding spaces defined above. Since $p$ is arbitrary, the Sobolev embedding applied componentwise gives $\bar{X}\in C([0,T];C^{s'}(\Lambda;\R^2))$ for all $0<s'<1/2$, as claimed.  Next,  introduce the spaces
\[
	W^{r,p}(\Lambda;\R^2) := \{z\in L^p(\Lambda;\R^2):\norm{z}_{r,p}<\infty\},
\]
where, abusing notation, for $z\in L^p(\Lambda;\R^2)$ we also write $\norm{z}_{r,p}=(\norm{z_1}^p_{r,p}+\norm{z_2}^p_{r,p})^{1/p}$. We see from \eqref{eq:X_tilde} for $\eta<2$, $\varepsilon>0$ and $0\leq t\leq T$ that
\begin{align}
	\norm{\tilde{X}(t)}_{\eta,p}
		& \leq \norm{S(t)X(0)}_{\eta,p} + \int_0^t \norm{S(t-r)F(X(r))}_{\eta,p}dr\nonumber \\
		& \lesssim \norm{X(0)}_{\eta,p} + \int_0^t (t-r)^{-1+\varepsilon/2} \norm{F(X(r))}_{\eta-2+\varepsilon,p}dr.\label{eq:mildBound}
\end{align}
Since $X$ takes values in $C(\Lambda;\R^2)$, it follows easily that the same holds for $F(X(\cdot))$. Choosing $\varepsilon=2-\eta$ and observing that $X(0)\in C^{2+s}(\Lambda;\R^{2})$ for $0<s<1/2$ therefore imply $\sup_{0\leq t\leq T}\norm{\tilde{X}(t)}_{\eta,p} < \infty$. Since this is true for all $p\geq 2$, the Sobolev embedding yields $\tilde{X}\in C([0,T];C^{\eta'}(\Lambda;\R^2))$ for all $\eta'<2$ and so $X\in C([0,T];C^{s'}(\Lambda;\R^2))$ for $0 < s' < 1/2$ by the regularity result for $\bar{X}$ from above,  and therefore (e.g., by \citep[Theorem 3.3.2]{Triebel2010a})
\[
	\sup_{0\leq r\leq T} \norm{F(X(r))}_{s'',p} \lesssim \sup_{0\leq r\leq T} \norm{F(X(r))}_{C^{s'}(\Lambda;\R^2)}<\infty
\]
for all $1/p<s''<s'<1/2$ and $p>2$. Using this in \eqref{eq:mildBound} with $s''=s+\varepsilon<1/2$ for sufficiently small $\varepsilon>0$ and with $2+s$ instead of $\eta$, we conclude at last $\sup_{0\leq t\leq T}\norm{\tilde{X}(t)}_{2+s,p} < \infty$ and thus $\tilde{X}\in C([0,T];C^{2+s}(\Lambda;\R^2))$, finishing the proof.
\end{proof}

\subsection{\label{subsec:Results-on-estimation}Results on parameter estimation}

Since we are not considering Dirichlet boundary conditions, we cannot rely on the Feynman-Kac arguments of  \citep{Altmeyer2019b} and \citep{Altmeyer2020a} to study the action of the semigroup generated by $\Delta$ on $K_{\delta,x_k}$. The following proof avoids this issue, and also holds for more general boundary conditions. The proof is inspired by \citep[Theorem 3]{Altmeyer2020a}, but is fully self-contained.

Consider the decomposition $A=\bar{A}+\tilde{A}$ into linear and nonlinear parts $\bar{A}$ and $\tilde{A}$ according to Section \ref{sec:mild solution}. With this we also set
\begin{align}
\bar{A}_{\delta}^{\Delta}(t,x_{k})&:=\sc{\bar{A}(t)}{\Delta K_{\delta,x_{k}}} = \sigma_A \left\langle \int_0^t S_A(t-s) dW_A(s), \Delta K_{\delta, x_k} \right\rangle  \label{eq:stochIntegral2} \\
&=\sigma_{A}\int_{0}^{t}\sc{S_{A}(t-s)\Delta K_{\delta,x_{k}}}{dW_{A}(s)}, \notag
\end{align}
as well as $\tilde{A}_{\delta}^{\Delta}(t,x_{k})=\sc{\tilde{A}(t)}{\Delta K_{\delta,x_{k}}}$. We also use the linear observed Fisher information $\bar{\c I}_{\delta}=\sum_{k=1}^{M}\int_{0}^{T}\left(\bar{A}_{\delta}^{\Delta}(t,x_{k})\right)^{2}dt$.

\begin{proof}[Proof of Theorem \ref{thm:estimate_DA}]
Using that $\norm{K_{\delta,x_{k}}}=\norm K_{L^{2}(\R)}$, the basic error decomposition \eqref{eq:errorDecomp} can equivalently be written as
\[
	\delta^{-1}(\hat{D}_{A,\delta}-D_{A}) = (\delta^2 \c I_{\delta})^{-1}\delta \c R_{\delta}+\sigma_{A}\norm K_{L^{2}(\R)}(\delta^2\c I_{\delta})^{-1/2}(\c I_{\delta}^{-1/2}\c M_{\delta}).
\]
The martingale part satisfies $\c M_{\delta}=\c M_{\delta}(T)$, where $\c M_{\delta}(t') =\sum_{k=1}^{M}\int_{0}^{t'}A_{\delta}^{\Delta}(t,x_{k})dW_{k}(t)$ is a continuous $\c F_{t'}$-martingale in $t'\geq 0$. Without loss of generality let the $K_{\delta,x_k}$ have disjoint supports, which is true for sufficiently small $\delta$, since $M$ is fixed. But then the processes $W_{k}$
are independent and the quadratic variation of the martingale $\c M_{\delta}(t')$ at $t'=T$ is exactly $\c I_{\delta}$. By a classical time-change \citep[Theorem 3.4.6]{karatzas98} we can write $\c M_{\delta}=\bar{w}_{\c{ I}_{\delta}}$ for a Brownian motion $(\bar{w}(t))_{t\geq 0}$,  possibly defined on an extension of the underlying probability space. We conclude from Proposition \ref{prop:FisherInfo}(i,ii,iii) below that $\c{I}_{\delta}/\E[\c{\bar{I}}_{\delta}]\rightarrow 1$ in probability and
\[
	\frac{\c M_{\delta}}{\c{ I}_{\delta}^{1/2}} =
\frac{\E[\c{ \bar{I}}_{\delta}]^{1/2}}{\c{ I}_{\delta}^{1/2}}\cdot \frac{\bar{w}_{\c{ I}_{\delta}}}{\E[\c{ \bar{I}}_{\delta}]^{1/2}} \r{d} N(0,1).
\]
Proposition \ref{prop:FisherInfo}(i,ii,iii) also shows $\delta^2 \c{I}_{\delta} \rightarrow \kappa$ in probability and the result follows from Slutsky's Lemma and Proposition \ref{prop:FisherInfo}(iv).
\end{proof}

\begin{proposition}
\label{prop:FisherInfo}The following holds as $\delta\rightarrow0$:
\begin{enumerate}
\item $\delta^2 \E[\c{\bar{I}}_{\delta}] \rightarrow \kappa := MT\sigma_{A}^{2}D_{A}^{-1}\norm K_{L^2(\R)}^2\Sigma^{-1}$ with $\Sigma$ from Theorem \ref{thm:estimate_DA},
\item $\c{\bar{I}}_{\delta}/\E[\c{\bar{I}}_{\delta}]\rightarrow 1$ in probability,
\item $\c I_{\delta}=\bar{\c I}_{\delta}+\c O_{\P}(\delta^{-1/2})$,
\item $\c R_{\delta}=\c O_{\P}(\delta^{-1/2})$.
\end{enumerate}
\end{proposition}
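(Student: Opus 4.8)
The plan is to establish the four statements of Proposition~\ref{prop:FisherInfo} by reducing everything to explicit computations with the heat semigroup $S_A$ generated by $D_A\Delta$ on $L^2(\Lambda)$, exploiting self-adjointness. For (i), I would start from the representation \eqref{eq:stochIntegral2}, which gives by the It\^o isometry
\[
	\E\left[\left(\bar{A}_{\delta}^{\Delta}(t,x_k)\right)^2\right] = \sigma_A^2 \int_0^t \norm{S_A(s)\Delta K_{\delta,x_k}}_{L^2(\Lambda)}^2\,ds.
\]
Summing over $k$ and integrating over $t\in[0,T]$, Fubini yields $\E[\bar{\c I}_\delta] = \sigma_A^2\sum_{k=1}^M \int_0^T (T-s)\norm{S_A(s)\Delta K_{\delta,x_k}}_{L^2}^2\,ds$. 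The key asymptotic input is that, after the rescaling $K_{\delta,x_k}(y)=\delta^{-1/2}K(\delta^{-1}(y-x_k))$, one has $\Delta K_{\delta,x_k} = \delta^{-2}(\Delta K)_{\delta,x_k}$ in an obvious notation, and the semigroup acts (up to boundary corrections that are exponentially small for fixed $x_k$ and small $\delta$) like the full-space heat semigroup on $\R$. A change of variables $s = \delta^2 u$ then shows $\int_0^T \norm{S_A(s)\Delta K_{\delta,x_k}}_{L^2}^2\,ds \sim \delta^{-3}\int_0^\infty \norm{e^{u D_A\Delta_{\R}}\Delta_{\R} K}_{L^2(\R)}^2\,du$, and the remaining spectral integral evaluates by Plancherel to $(4D_A)^{-1}\norm{\partial_x K}_{L^2(\R)}^2$. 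Collecting the constants and using $\norm{K_{\delta,x_k}} = \norm{K}_{L^2(\R)}$ gives precisely $\delta^2\E[\bar{\c I}_\delta]\to \kappa = MT\sigma_A^2 D_A^{-1}\norm{K}_{L^2(\R)}^2\Sigma^{-1}$ with $\Sigma$ as in Theorem~\ref{thm:estimate_DA}.

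For (ii), I would show $\Var(\bar{\c I}_\delta) = o(\E[\bar{\c I}_\delta]^2)$ and invoke Chebyshev. Since $\bar{\c I}_\delta$ is a quadratic functional of a Gaussian process, its variance is controlled by $L^2$-norms of the covariance operator; a Wick/It\^o-chaos expansion gives $\Var(\bar{\c I}_\delta) \lesssim \sum_{k,k'}\left(\int_0^T\int_0^T \Cov(\bar A^\Delta_\delta(t,x_k),\bar A^\Delta_\delta(t',x_{k'}))^2\,dt\,dt'\right)$, and each covariance is again an explicit semigroup integral $\sigma_A^2\int_0^{t\wedge t'}\sc{S_A(t-s)\Delta K_{\delta,x_k}}{S_A(t'-s)\Delta K_{\delta,x_{k'}}}\,ds$. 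Because $\E[\bar{\c I}_\delta]^2 \asymp \delta^{-4}$, it suffices to check $\Var(\bar{\c I}_\delta) = o(\delta^{-4})$; the same rescaling argument shows each diagonal ($k=k'$) term is $\c O(\delta^{-5}\cdot\delta^2) = \c O(\delta^{-3})$ after the time integrations contribute an extra $\delta^2$, while off-diagonal terms vanish for small $\delta$ by disjointness of supports, so indeed $\Var(\bar{\c I}_\delta)=\c O(\delta^{-3}) = o(\delta^{-4})$.

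For (iii), I write $\c I_\delta - \bar{\c I}_\delta = \sum_k \int_0^T\left[(\tilde A^\Delta_\delta(t,x_k))^2 + 2\bar A^\Delta_\delta(t,x_k)\tilde A^\Delta_\delta(t,x_k)\right]dt$. By Theorem~\ref{thm:existenceUniqueness}, $\tilde A \in C([0,T];C^{2+s}(\Lambda))$ $\P$-a.s., so $\tilde A^\Delta_\delta(t,x_k) = \sc{\tilde A(t)}{\Delta K_{\delta,x_k}} = \sc{\Delta\tilde A(t)}{K_{\delta,x_k}}$ by integration by parts (periodic boundary, no boundary terms), and $\Delta\tilde A(t)\in C^{s}(\Lambda)$; hence $|\tilde A^\Delta_\delta(t,x_k)| \le \norm{\Delta\tilde A(t)}_\infty\norm{K_{\delta,x_k}}_{L^1} \lesssim \delta^{1/2}$, uniformly in $t$, $\P$-a.s. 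This gives $\int_0^T(\tilde A^\Delta_\delta)^2\,dt = \c O_{\P}(\delta)$ and, by Cauchy--Schwarz together with $\bar{\c I}_\delta = \c O_{\P}(\delta^{-2})$ from (i)--(ii), the cross term is $\c O_{\P}(\delta^{-1}\cdot\delta^{1/2}) = \c O_{\P}(\delta^{-1/2})$, as claimed. For (iv), the same bound $|A^\Delta_\delta(t,x_k)| \le |\bar A^\Delta_\delta(t,x_k)| + \c O_{\P}(\delta^{1/2})$ combined with $\int_0^T|\bar A^\Delta_\delta(t,x_k)|\,dt = \c O_{\P}(\delta^{-1})$ (Cauchy--Schwarz and (i)) plus $\sup_{0\le t\le T}|\sc{f_A(X(t,\cdot),\cdot)}{K_{\delta,x_k}}| \le \norm{f_A(X(t,\cdot),\cdot)}_\infty\norm{K_{\delta,x_k}}_{L^1}\lesssim \delta^{1/2}$ (using $t\mapsto f_A(X(t,\cdot),\cdot)\in C([0,T];C(\Lambda))$, again from Theorem~\ref{thm:existenceUniqueness}) yields $\c R_\delta = \c O_{\P}(\delta^{-1}\cdot\delta^{1/2}) = \c O_{\P}(\delta^{-1/2})$.

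I expect the main obstacle to be part~(i): making rigorous the replacement of the torus heat semigroup by the full-space one and justifying the interchange of limit, sum over $k$, and the improper time integral with uniform control of the error term, so that the constant $\kappa$ — and in particular the exact factor $\norm{\partial_x K}_{L^2(\R)}^2/(4D_A)$ coming from the Plancherel evaluation of $\int_0^\infty\norm{e^{uD_A\Delta_\R}\Delta_\R K}_{L^2(\R)}^2\,du$ — comes out correctly. The variance bound in (ii) is conceptually routine (Gaussian chaos) but requires the same careful scaling bookkeeping; parts (iii) and (iv) are comparatively soft, relying only on the a.s.\ spatial regularity from Theorem~\ref{thm:existenceUniqueness} and elementary $L^1$--$L^\infty$ estimates on the rescaled kernels.
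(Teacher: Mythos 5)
Parts (iii) and (iv) of your proposal are essentially the paper's own argument: bound $\lvert\tilde A^{\Delta}_{\delta}(t,x_k)\rvert$ resp.\ $\lvert\sc{f_A(X(t,\cdot),\cdot)}{K_{\delta,x_k}}\rvert$ by a sup-norm (from Theorem \ref{thm:existenceUniqueness}) times $\norm{K_{\delta,x_k}}_{L^1}\lesssim\delta^{1/2}$, and use Cauchy--Schwarz against $\bar{\c I}_{\delta}^{1/2}=\c O_{\P}(\delta^{-1})$. Part (ii) also follows the paper's Wick/covariance route; your only slip there is the claim that the off-diagonal terms ``vanish for small $\delta$ by disjointness of supports'': they do not, because all coordinates are driven by the same noise and the semigroup spreads the kernels instantly, so the covariance $\sigma_A^2\int_0^{t\wedge t'}\sc{S_A(t-s)\Delta K_{\delta,x_k}}{S_A(t'-s)\Delta K_{\delta,x_{k'}}}\,ds$ is small but nonzero. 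This is harmless: with $M$ fixed you can bound off-diagonal by diagonal terms via Cauchy--Schwarz, or, as the paper does, start from $\Var(\sum_k Y_k)\le M\sum_k\Var(Y_k)$; and your target $\Var(\bar{\c I}_{\delta})=o(\delta^{-4})$ is weaker than the $\c O(\delta^{-2})$ the paper actually proves.

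The genuine problem is part (i), which is also where you depart from the paper. Your route (rescale, replace the torus semigroup by the heat semigroup on $\R$, evaluate by Plancherel) can be made to work, but both numbers you state are wrong. The correct scaling is $\norm{S_A(s)\Delta K_{\delta,x_k}}^2\approx\delta^{-4}\norm{e^{(s/\delta^2)D_A\Delta_{\R}}\Delta_{\R}K}^2_{L^2(\R)}$, so after $s=\delta^2u$ the time integral is of order $\delta^{-2}$, not $\delta^{-3}$; and since $\int_0^\infty e^{-2D_Au\xi^2}du=(2D_A\xi^2)^{-1}$, Plancherel gives $\int_0^\infty\norm{e^{uD_A\Delta_{\R}}\Delta_{\R}K}^2_{L^2(\R)}du=(2D_A)^{-1}\norm{\partial_x K}^2_{L^2(\R)}$, not $(4D_A)^{-1}$. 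With your stated values, ``collecting the constants'' does not give $\kappa$: the $\delta^{-3}$ makes $\delta^2\E[\bar{\c I}_{\delta}]$ diverge, and fixing only the exponent would yield $\kappa/2$ — and since the entire content of (i) is the exact constant entering the CLT variance of Theorem \ref{thm:estimate_DA}, this is not cosmetic. Moreover, the torus-to-line replacement is not ``exponentially small'' uniformly in $s\le T$; it is exponentially good only for $s\lesssim\delta^2$, and for $s$ of order one you need a separate argument that the contribution is $\c O(1)$, negligible against $\delta^{-2}$. The paper sidesteps all of this with an exact computation: self-adjointness gives $\norm{S_A(s)\Delta K_{\delta,x_k}}^2=\sc{S_A(2s)\Delta K_{\delta,x_k}}{\Delta K_{\delta,x_k}}$, and the identity $2D_A\int_0^tS_A(2s)\Delta K_{\delta,x_k}\,ds=S_A(2t)K_{\delta,x_k}-K_{\delta,x_k}$ reduces $\E[\bar{\c I}_{\delta}]$ to a bounded remainder plus $-T\sum_k\sc{K_{\delta,x_k}}{\Delta K_{\delta,x_k}}=MT\delta^{-2}\norm{\partial_x K}^2_{L^2(\R)}$ times $\sigma_A^2/(2D_A)$ — no approximation, valid for any self-adjoint boundary conditions, which is exactly what underpins Remark \ref{rem:generalProof}. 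If you keep your route, correct the two constants and add the two-regime small/large-$s$ comparison; otherwise the semigroup identity is both shorter and more general.
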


\begin{proof}
(i).  We find from \eqref{eq:stochIntegral2} and Itô's isometry (\citep[Proposition 4.28]{DaPrato2014}) that
\begin{equation}
\E[\bar{\c I}_{\delta}]=\sigma_{A}^{2}\sum_{k=1}^{M}\int_{0}^{T}\int_{0}^{t}\norm{S_{A}(s)\Delta K_{\delta,x_{k}}}^{2}dsdt.\label{eq:I_bar}
\end{equation}
The operators $S_{A}(s)$ are self-adjoint such that $\norm{S_{A}(s)\Delta K_{\delta,x_{k}}}^{2}=\sc{S_{A}(2s)\Delta K_{\delta,x_{k}}}{\Delta K_{\delta,x_{k}}}$.
The semigroup identity $2\int_{0}^{t}S_{A}(2s)D_A\Delta K_{\delta,x_{k}}ds=S_{A}(2t)K_{\delta,x_{k}}-K_{\delta,x_{k}}$
therefore implies
\begin{align*}
\E[\bar{\c I}_{\delta}] & =\frac{1}{2}D_{A}^{-1}\sigma_{A}^{2}\sum_{k=1}^{M}\left(\int_{0}^{T}\sc{S_{A}(2t)\Delta K_{\delta,x_{k}}}{K_{\delta,x_{k}}}dt-T\sc{K_{\delta,x_{k}}}{\Delta K_{\delta,x_{k}}}\right)\\
 & =\frac{1}{2}D_{A}^{-1}\sigma_{A}^{2}\sum_{k=1}^{M}\left(\frac{1}{2}D_{A}^{-1}\sc{S_{A}(2T)K_{\delta,x_{k}}-K_{\delta,x_{k}}}{K_{\delta,x_{k}}}-T\sc{K_{\delta,x_{k}}}{\Delta K_{\delta,x_{k}}}\right).
\end{align*}
Noting that $\norm{S_A(T)K_{\delta,x_k}}\leq \norm{K_{\delta,x_k}}$, because the semigroup is contractive, (i) follows from $\sc{K_{\delta,x_{k}}}{\Delta K_{\delta,x_{k}}}=-\delta^{-2}\norm{\frac{\partial}{\partial x}K}_{L^{2}(\R)}^{2}$.

(ii). It is enough to show $\delta^{4}\text{Var}(\bar{\c I}_{\delta})\rightarrow0$,
because this and (i) imply $\text{Var}(\bar{\c I}_{\delta})/\E[\bar{\c I}_{\delta}]^2\rightarrow 0$. Since $M$ is fixed, we can use the Cauchy-Schwarz inequality to obtain the upper bound $\text{Var}(\bar{\c I}_{\delta}) \leq M \sum_{k=1}^{M}\text{Var}(\int_{0}^{T}(\bar{A}_{\delta}^{\Delta}(t,x_{k}))^{2}dt)$. \eqref{eq:stochIntegral2}  shows that the $\bar{A}_{\delta}^{\Delta}(t,x_{k})$ are centered Gaussian random variables. Wick's formula (\citep[Theorem 1.28]{Janson1997})
gives
\begin{align*}
\text{Var}(\bar{\c I}_{\delta}) & \lesssim \sum_{k=1}^{M}\int_{0}^{T}\int_{0}^{T}\text{Cov}\left(\left(\bar{A}_{\delta}^{\Delta}(t,x_{k})\right)^{2},\left(\bar{A}_{\delta}^{\Delta}(t',x_{k})\right)^{2}\right)dt'dt\\
 & =4\sum_{k=1}^{M}\int_{0}^{T}\int_{0}^{t}\text{Cov}\left(\bar{A}_{\delta}^{\Delta}(t,x_{k}),\bar{A}_{\delta}^{\Delta}(t',x_{k})\right)^{2}dt'dt\\
 & =4\sigma_{A}^{4}\sum_{k=1}^{M}\int_{0}^{T}\int_{0}^{t}\left(\int_{0}^{t'}\sc{S_{A}(t-s)\Delta K_{\delta,x_{k}}}{S_{A}(t'-s)\Delta K_{\delta,x_{k}}}ds\right)^{2}dt'dt,
\end{align*}
again using Itô's isometry in the last line. The integrand of the $ds$-integral equals
\begin{align*}
	\norm{S_A((t+t'-2s)/2)\Delta K_{\delta,x_k}}^2 \geq 0.
\end{align*}
On the other hand,
arguing as in (i) by the semigroup identity and $S_{A}(t-s)=S_{A}(t-t')S_{A}(t'-s)$, the $ds$-integral equals
\begin{align*}
 & 0\leq \int_{0}^{t'}\sc{S_{A}(2(t'-s))\Delta K_{\delta,x_{k}}}{S_{A}(t-t')\Delta K_{\delta,x_{k}}}ds  \\
 & \quad =\frac{1}{2D_{A}}\sc{K_{\delta,x_{k}}}{S_{A}(t-t')(-\Delta) K_{\delta,x_{k}}}-\frac{1}{2D_{A}}\sc{S_{A}(2t')K_{\delta,x_{k}}}{S_{A}(t-t')(-\Delta) K_{\delta,x_{k}}}\\
 & \quad = \frac{1}{2D_{A}}\sc{K_{\delta,x_{k}}}{S_{A}(t-t')(-\Delta) K_{\delta,x_{k}}}-\frac{1}{2D_{A}}\sc{(-\Delta)v}{v},
\end{align*}
with $v=S_{A}((t+t')/2)K_{\delta,x_{k}}$.  The operator $-\Delta$ is non-negative, and so $\sc{(-\Delta)v}{v}\geq 0$.  Conclude by the Cauchy-Schwarz inequality
\begin{align*}
\text{Var}(\bar{\c I}_{\delta})
	&  \lesssim \norm{K}^2_{L^2(\R)} \sum_{k=1}^{M}\int_0^T\int_{0}^{t} \norm{S_{A}(t-t')\Delta K_{\delta,x_{k}}}^{2}dt'dt \\
	& = \norm{K}^2_{L^2(\R)} \sum_{k=1}^{M}\int_0^T\int_{0}^{t} \norm{S_{A}(t')\Delta K_{\delta,x_{k}}}^{2}dt'dt \lesssim \E[\bar{\c I}_{\delta}]\lesssim \delta^{-2},
\end{align*}
cf. \eqref{eq:I_bar} and (i), implying $\delta^4\text{Var}(\bar{\c I}_{\delta})\rightarrow 0$ and (ii) follows.

(iii). Recall from Theorem \ref{thm:existenceUniqueness} that $\tilde{A}\in C([0,T];C^{2}(\Lambda))$ $\P$-almost surely. This means
\begin{align}
\left|\tilde{A}_{\delta}^{\Delta}(t,x_{k})\right| & =\left|\sc{\Delta\tilde{A}(t)}{K_{\delta,x_{k}}}\right|\leq\norm{\tilde{A}}_{C([0,T];C^{2}(\Lambda))}\norm{K_{\delta,x_{k}}}_{L^{1}(\Lambda)}=\c O_{\P}(\delta^{1/2}),\label{eq:A_tilde}
\end{align}
using  $\norm{K_{\delta,x_{k}}}_{L^{1}(\Lambda)}\leq\delta^{1/2}\norm K_{L^{1}(\R)}$.
We conclude by the Cauchy-Schwarz inequality and (i), because
\begin{align*}
\left|\c I_{\delta}-\bar{\c I}_{\delta}\right| & \lesssim\sum_{k=1}^{M}\int_{0}^{T}\left(\left(\tilde{A}_{\delta}^{\Delta}(t,x_{k})\right)^{2}+2\left|\tilde{A}_{\delta}^{\Delta}(t,x_{k})\bar{A}_{\delta}^{\Delta}(t,x_{k})\right|\right)dt=\c O_{\P}(\delta+\delta^{1/2}\bar{\c I}_{\delta}^{1/2}).
\end{align*}
(iv). The Cauchy-Schwarz inequality and (i,ii,iii) show
\begin{align*}
|\c R_{\delta}| & \lesssim\c I_{\delta}^{1/2}\left(\sum_{k=1}^{M}\int_{0}^{T}\sc{F_A(X(t))}{K_{\delta,x_{k}}}^{2}dt\right)^{1/2}
	\lesssim \c I_{\delta}^{1/2}\delta^{1/2}\norm{F_{A}(X(\cdot))}_{C([0,T];C(\Lambda))}\norm{K}_{L^{1}(\R)}.
\end{align*}
This is of order $\c O_{\P}(\delta^{-1/2})$, using that $F_{A}(X(\cdot))\in C([0,T];C(\Lambda))$ $\P$-almost surely by Theorem \ref{thm:existenceUniqueness}, recalling that $z\mapsto F_{A}(z)$
is Lipschitz, and the result follows.
\end{proof}


\section{Setup of numerical and real data experiments}\label{sec:setup}

Numerical simulations were performed in the programming language Julia using a finite difference scheme for semilinear SPDEs \citep{Lord2014}. The source code can be obtained from the authors upon request. For comparison to the experimental setup of \citep{Lockley_2015} we let $L=20$, $T\in (0,150]$, and set $dt=T/n$ and $dx=L/m$ as step sizes for time and space discretisations, and choose the number of grid points $n$ and $m$ in time and space such that $dt \asymp (dx)^2$. This ensures that the Courant-Friedrichs-Lewy (CFL) condition is satisfied  \citep{Lord2014} in order to achieve stable simulations. All simulations were performed with parameters and initial conditions from \citep{{Lockley_2015}} obtained by calibrating the deterministic Meinhardt model to the experimental data displayed in \ref{fig:1}(left), which were averaged over 18 different cells. The parameters in \eqref{eq:stochasticMeinhardt}, \eqref{eq:nonlinearities} and \eqref{eq:signal} are taken from \citep[Table S1, Figure 4]{Lockley_2015},
\begin{alignat}{2}
D_{A} &= 4.415 \times 10^{-2}, &\quad D_{I} &= 9.768 \times 10^{-2}, \notag \\
r_{A} &= 2.393 \times 10^{-1}, &\quad r_{I} &= 2.378 \times 10^{-1}, \notag \\
b_{A} &= 2.776 \times 10^{-1}, &\quad b_{I} &= 2.076 \times 10^{-1}, \notag \\
\zeta_{A} &= 5.647 \times 10^{-3}, &\quad \zeta_{I} &= 3.397 \times 10^{-1}, \notag \\
a &= 1.280 \times 10^{-2}. &\quad & \notag
\end{alignat}
The initial conditions for the activator $A$ and inhibitor $I$ are taken correspondingly from \citep[Table S2, Figure 4]{Lockley_2015}.
By adding the stochastic part in \eqref{eq:stochasticMeinhardt} it may happen that the concentrations of $A$ or $I$ become negative. Such realisations were not taken into account when computing the time to repolarisation from simulated data.  

For the real data analysis in Section \ref{sec:realData} data for the 18 single cells were used. They each contain $M=100$ spatial measurements for evolving over time. For a detailed description of the original experimental data see \citep{dalous2008reversal} and \citep{lockley2017image}.

\bibliographystyle{apalike}
\bibliography{refs}

\end{document}